\definecolor{Gray}{gray}{0.9}                            
\theoremstyle{plain} 
\newtheorem{proposition}{Proposition}[section] 
\newtheorem{theorem}[proposition]{Theorem} 
\newtheorem{lemma}[proposition]{Lemma} 
\newtheorem{fact}[proposition]{Fact} 
\theoremstyle{definition}
\theoremstyle{remark} 
\newtheorem{remark}[proposition]{Remark} 
\newcommand{\I}{{\mathcal{I}}}     
\renewcommand{\O}{{\mathcal{O}}}   
\newcommand{\T}{{\mathcal{T}}}    
\newcommand{\B}{{\mathfrak{B}}} 
\newcommand{\R}{{\mathfrak{R}}} 
\newcommand{\N}{{\mathcal{N}}}     
\newcommand{\ZZ}{{\mathbb{Z}}} 
\newcommand{\PP}{{\mathbb{P}}}          
\providecommand{\Sec}{\mathop{\rm Sec}\nolimits} 
\providecommand{\codim}{\mathop{\rm codim}} 
\numberwithin{equation}{section}
\title[Special cubic Cremona transformations of $\mathbb{P}^6$ and $\mathbb{P}^7$]{Special cubic Cremona transformations of $\mathbb{P}^6$ and $\mathbb{P}^7$} 
\author[G. Staglian\`o]{Giovanni Staglian\`o} 
\address{Instituto de Matem\'atica -- Universidade Federal Fluminense} 
\date{\today} 
\email{\href{mailto:giovannistagliano@gmail.com}{giovannistagliano@gmail.com}} 
\thanks{This work was supported by a BJT fellowship from CAPES (N. A028/2013).} 
\keywords{Cremona transformation, threefold, base locus.} 
\subjclass[2010]{14E05, 
                 14E07,
                 14J30  
                 } 
\begin{document}

\begin{abstract} A famous result of B. Crauder and S. Katz (1989) concerns the classification of special Cremona transformations whose base locus has dimension at most two. Furthermore, they also proved that a special Cremona transformation with base locus of dimension three has to be one of the following: 1) a quinto-quintic transformation of $\mathbb{P}^5$; 2) a cubo-quintic transformation of $\mathbb{P}^6$; or 3) a quadro-quintic transformation of $\mathbb{P}^8$. Special Cremona transformations as in case 1) have been classified by L. Ein and N. Shepherd-Barron (1989), while in our previous work (2013), we classified special quadro-quintic Cremona transformations of $\mathbb{P}^8$. The main aim here is to consider the problem of classifying special cubo-quintic Cremona transformations of $\mathbb{P}^6$, concluding the classification of special Cremona transformations whose base locus has dimension three.
\end{abstract}

\maketitle

\section*{Introduction} 
In this paper, 
we investigate special Cremona transformations, i.e. birational 
transformations $\varphi:\PP^n\dashrightarrow\PP^n$ 
of a complex projective space into itself,
whose base locus $\B\subset\PP^n$ is a smooth and irreducible variety.
One says that $\varphi$ is of type $(\delta_1,\delta_2)$ if the 
linear system defining it (resp. defining its inverse) 
belongs to $|\O_{\PP^n}(\delta_1)|$ (resp. $|\O_{\PP^n}(\delta_2)|$).
When the dimension of the base locus is at most two, B. Crauder and S. Katz obtained the following results:
\begin{theorem}[\cite{crauder-katz-1989}, see also \cite{katz-cubo-cubic}]\label{classificazioneCasoCurve}
 Let $\varphi$ be a special 
 Cremona transformation of $\PP^n$ whose base locus $\B$ has dimension $1$. Then 
 one of the two following cases occurs:
 \begin{enumerate}[(1)]
  \item\label{cubocubicinP4} $n=3$, $\varphi$ is of type $(3,3)$, and $\B$ is a curve of degree $6$ and genus $3$;
  \item $n=4$, $\varphi$ is of type $(2,3)$, and $\B$ is a curve of degree $5$ and genus $1$.
 \end{enumerate}
\end{theorem}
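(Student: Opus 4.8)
The plan is to realize $\varphi$ on the blow-up of its base locus and turn birationality into intersection-theoretic identities which, together with positivity constraints on a nondegenerate curve, leave only finitely many numerical possibilities.

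First I would set up the resolution. Write $c=\codim_{\PP^n}\B=n-1$, let $\pi\colon X\to\PP^n$ be the blow-up of $\PP^n$ along $\B$ with exceptional divisor $E$, and put $H=\pi^\ast\O_{\PP^n}(1)$, so that $\operatorname{Pic}(X)=\ZZ H\oplus\ZZ E$ and $K_X=-(n+1)H+(n-2)E$. Since $\B$ is smooth of degree $d$ and genus $g$, adjunction gives $\deg\N_{\B/\PP^n}=2g-2+(n+1)d$, and the only nonzero top intersections on $X$ are
\[ H^n=1,\qquad H\cdot E^{n-1}=(-1)^n d,\qquad E^n=(-1)^n\bigl(2g-2+(n+1)d\bigr). \]
The transformation $\varphi$ is resolved on $X$ by $|\delta_1 H-E|$; I would first show that, because $\B$ is smooth and is the whole base scheme, this system is base-point free, so that it defines a birational morphism $\psi\colon X\to\PP^n$ with $\psi^\ast\O_{\PP^n}(1)=\delta_1H-E$. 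Expanding $(\delta_1H-E)^n=1$ with the numbers above then yields the fundamental relation
\[ \delta_1^{\,n}-\bigl(n\delta_1-(n+1)\bigr)d+2g-3=0, \]
which one checks is satisfied by both asserted cases, $(n,\delta_1,d,g)=(3,3,6,3)$ and $(4,2,5,1)$.

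Next I would exploit the inverse. Since $\varphi^{-1}$ is again special, of type $(\delta_2,\delta_1)$, its indeterminacy locus $\B'$ is contracted by $\psi$, and the relation $H=\delta_2(\delta_1H-E)-E'$ (obtained by pulling $H$ back through $\varphi^{-1}$) identifies the contracted class as $E'=(\delta_1\delta_2-1)H-\delta_2 E$. Comparing the two expressions for $K_X$, one via $\pi$ and one via $\psi$, links $\delta_1,\delta_2$ and $n$: in the symmetric regime, where $\B'$ is again a curve and $X=\operatorname{Bl}_{\B'}\PP^n$, this comparison forces $n=3$ and $\delta_1=\delta_2=3$, after which the fundamental relation together with the genus of $\B$ pins down $d=6,\ g=3$. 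The remaining, asymmetric possibilities—where $\B'$ jumps in dimension, as happens for the elliptic quintic in $\PP^4$—have to be handled through the geometry of the locus swept out by the multisecant lines of $\B$ (the trisecants when $\delta_1=2$, the $4$-secants when $\delta_1=3$), which is precisely the divisor contracted by $\psi$.

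To close off a finite count I would feed in positivity: a nondegenerate irreducible curve satisfies $d\ge n$, Castelnuovo's bound caps $g$ in terms of $d$ and $n$, and nefness and bigness of $\delta_1H-E$ force further inequalities among $\delta_1,d,g$; combined with the fundamental relation and the degree of $\Sec(\B)$ computed by the double-point formula, these should bound $n$ and isolate the two solutions, each then realized by a known curve (the determinantal sextic of genus $3$ cut out by cubics, and the elliptic normal quintic cut out by quadrics). The hard part, I expect, is twofold. First, justifying that a single blow-up resolves $\varphi$, i.e. that $|\delta_1H-E|$ is base-point free, so that the clean identity $(\delta_1H-E)^n=1$ is genuinely available and not merely an inequality. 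Second, bounding $n$ in the asymmetric regime, where the intersection calculus alone is inconclusive and one must instead analyze the dimension and degree of the contracted multisecant locus and show that it can meet $\B$ and sweep out space in the required manner only for $n\le 4$.
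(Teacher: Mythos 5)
First, a caveat: the paper you were given does not prove Theorem \ref{classificazioneCasoCurve} at all; it quotes it from \cite{crauder-katz-1989}, so your proposal can only be judged on its own merits, and it visibly follows the Crauder--Katz/Ein--Shepherd-Barron strategy. Within that strategy your bookkeeping is correct: the intersection numbers on the blow-up $X$ are right, the relation $\delta_1^n-(n\delta_1-(n+1))d+2g-3=0$ does follow from $(\delta_1H-E)^n=1$, and both listed cases satisfy it. Also, the base-point-freeness of $|\delta_1H-E|$, which you flag as a main difficulty, is actually immediate: ``special'' means that the scheme cut out by the defining forms is the smooth variety $\B$, so the ideal sheaf they generate equals $\I_{\B,\PP^n}$, and the universal property of blowing up gives $\pi^{-1}\I_{\B,\PP^n}\cdot\O_X=\O_X(-E)$; hence the pulled-back forms generate $\O_X(\delta_1H-E)$ and $\psi$ is automatically a morphism.

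The genuine gaps are elsewhere. (i) The sentence ``since $\varphi^{-1}$ is again special'' is unjustified, and it is false for special Cremona transformations in general: for the type $(2,5)$ transformations of $\PP^8$ in Theorem \ref{thmQuadratic}, equation \eqref{dimensionBaseLocus} shows the base locus of the inverse has codimension $2$, which by \cite[Theorem~3.2]{ein-shepherdbarron} is impossible for a \emph{special} transformation of $\PP^8$; so that base locus is singular. Your ``symmetric regime'' computation identifies $K_X$ computed from $\pi$ with $-(n+1)H'+(n-\dim\B'-1)E'$, which presupposes that $\psi$ is the blow-up of a \emph{smooth} $\B'$ whose reduced, irreducible exceptional divisor has class $E'=(\delta_1\delta_2-1)H-\delta_2E$; none of this is available a priori --- in the two cases of the theorem the inverse does turn out to be special, but that is an output of the classification, not an input. (ii) More decisively, the case analysis is never closed: the ``asymmetric regime'' --- which contains case (2) of the statement, the elliptic quintic --- is explicitly deferred to an unspecified study of multisecant loci, i.e.\ the hardest part of the theorem is stated as a plan, not proved. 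Note that the intersection calculus is not ``inconclusive'' there: once one proves that the $\psi$-exceptional locus is an irreducible divisor and computes the discrepancy along it (this structure theory is the real content of \cite{crauder-katz-1989}), the same comparison of canonical classes yields $(n-\dim\B'-1)\,\delta_2=3$ and $(n+1)(\delta_1-1)=(n-\dim\B'-1)(\delta_1\delta_2-1)$, forcing $(n,\delta_1,\delta_2)\in\{(3,3,3),\,(4,2,3)\}$; then $\deg_{n-1}(\varphi)=\delta_2$ gives $9-d=3$, resp.\ $8-d=3$, and your fundamental relation gives $g$. (This last point also repairs a small circularity in your sketch: the fundamental relation alone is one equation in the two unknowns $d,g$ and cannot by itself ``pin down $d=6$, $g=3$''.) As written, then, the proposal assembles the correct machinery but proves neither the completeness of the list nor case (2).
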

In the same paper,  they also proved the following: 
\begin{theorem}[\cite{crauder-katz-1989}, see also \cite{semple-tyrrell,cite2-semple}]
 Let $\varphi$ be a special 
 Cremona transformation of $\PP^n$ whose base locus $\B$ has dimension $2$. Then 
 one of the following cases occurs:
 \begin{enumerate}[(1)]
  \item $n=4$, $\varphi$ is of type $(3,2)$, 
  and $\B$ is a quintic elliptic scroll;
  \item $n=4$, $\varphi$ is of type $(4,4)$, 
  and $\B$ is a degree $10$ determinantal surface  given 
  by the $4\times4$ minors of a $4\times5$ matrix of linear forms; 
  \item $n=5$, $\varphi$ is of type $(2,2)$, and $\B$ is the Veronese surface;
  \item $n=6$, $\varphi$ is of type $(2,4)$, and $\B$ is a septic elliptic scroll; 
  \item $n=6$, $\varphi$ is of type $(2,4)$, and $\B$ is $\PP^2$ blown up at 
  eight points and embedded in $\PP^6$ as an octic surface by 
  quartic curves passing  
  through all eight points.
 \end{enumerate}
\end{theorem}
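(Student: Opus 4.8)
The plan is to resolve $\varphi$ by blowing up its base locus and to convert the two conditions ``birational'' and ``of type $(\delta_1,\delta_2)$'' into numerical identities among the invariants of $\B$; since $\B$ is only a surface, those identities can then be matched against the classification of low-degree, low-genus surfaces. Concretely, let $\pi\colon X\to\PP^n$ be the blow-up of $\PP^n$ along $\B$, with exceptional divisor $E$, and write $H=\pi^{*}\O_{\PP^n}(1)$. Since $\B$ is smooth and $\varphi$ is given by the system $|\I_{\B}(\delta_1)|$, the map $\varphi$ lifts to a morphism $\psi\colon X\to\PP^n$ with $\psi^{*}\O_{\PP^n}(1)=\delta_1 H-E$. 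Because $\varphi$ is birational, so is $\psi$; performing the analogous construction for $\varphi^{-1}$ and comparing the two exceptional divisors links the invariants of $\B$ to those of the base locus $\B'$ of the inverse, and a dimension count gives the duality $\dim\B+\dim\B'=n-1$ (which, for $\dim\B=1$, is exactly the pairing of the cases of Theorem~\ref{classificazioneCasoCurve}).

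Next I would extract the numerical constraints. Birationality of $\psi$ gives $(\delta_1 H-E)^{n}=1$, and the computation of the degree of the inverse system gives $(\delta_1 H-E)^{n-1}\cdot H=\delta_2$; the remaining relations come from intersecting against higher powers of $E$. The point is that for $j\ge 1$ the numbers $H^{\,n-j}E^{j}$ are, up to sign, the Segre classes of the normal bundle $\N_{\B/\PP^n}$, so expanding these identities produces a system of equations in $n$, $\delta_1$, $\delta_2$, the degree $d=\deg\B$, the sectional genus, and the Chern numbers $c_1^2$ and $c_2$ of $\B$. Adjunction on $\B$ together with the double-point formula for its embedding supply the remaining relations, so that the entire configuration is governed by finitely many integers subject to several Diophantine and positivity constraints.

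The geometric input that makes $\dim\B\le 2$ genuinely tractable is that the divisor contracted by $\psi$ is, up to strict transforms, the secant variety $\Sec(\B)$ (indeed one knows that the contracted hypersurface has degree $\delta_1\delta_2-1$). For $\varphi$ to be birational of the stated type this locus must be a hypersurface, which forces $\B$ to be secant-defective precisely when the codimension is large. For surfaces this is extremely restrictive: in the boundary case $n=5$, $(\delta_1,\delta_2)=(2,2)$, Severi's theorem identifies the unique smooth nondegenerate secant-defective surface in $\PP^5$ as the Veronese surface, pinning down case (3) at once. In the lower-codimension cases ($n=4,6$) the secant variety is automatically a hypersurface, and there the numerical system of the previous step is what bounds $d$ and $n$ and leaves only finitely many candidate pairs (surface, transformation).

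It then remains, for each surviving numerical solution, to identify $\B$ and to certify that a genuine special Cremona transformation realizes it. Here I would invoke the classification of surfaces of small degree and sectional genus---elliptic scrolls, the rational surfaces obtained by embedding $\PP^2$ blown up at points, and determinantal surfaces given by minors of a matrix of linear forms---to recognize the five surfaces in the statement, and then verify directly, via the resolution $\psi$ and the explicit linear systems, that each does define a Cremona transformation of the asserted type. I expect the main obstacle to be exactly this bounding-and-exhaustion step: ruling out the a priori infinitely many numerical possibilities and proving that no sporadic surface beyond the listed five satisfies all the Diophantine and positivity constraints simultaneously. Once the numerics are fixed, recognizing the surface and checking existence is comparatively routine.
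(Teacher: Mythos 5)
First, a point of reference: the paper does not prove this theorem at all --- it is quoted from Crauder--Katz with a citation --- so your proposal can only be measured against the known proof strategy. Your general toolkit (resolve $\varphi$ on the blow-up of $\B$, translate birationality and the type into identities among Segre/Chern numbers of $\B$, then invoke adjunction and the classification of low-degree surfaces) is indeed the right flavor and matches what Crauder and Katz actually do. But the step you rely on to make everything finite is wrong, and the correct replacement is exactly what is missing. Your claimed ``duality'' $\dim\B+\dim\B'=n-1$ is false: it fails for three of the five cases of the very statement you are proving. For the type $(4,4)$ transformation of $\PP^4$ one has $\dim\B=\dim\B'=2$, so the sum is $4\neq 3$; and for the type $(2,4)$ transformations of $\PP^6$, formula \eqref{dimensionBaseLocus} of the paper gives $\dim\B'=4$, so the sum is $6\neq5$. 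What the comparison of the two resolutions (equivalently, of canonical classes) actually yields is \eqref{dimensionBaseLocus} itself, and this is the engine of the whole classification: setting $\dim\B=2$ there gives $n=\bigl(3\delta_1\delta_2+\delta_2-4\bigr)/\bigl((\delta_1-1)\delta_2\bigr)$, whose only integral solutions with $\delta_1,\delta_2\geq2$ are $(n,\delta_1,\delta_2)\in\{(4,3,2),\,(4,4,4),\,(5,2,2),\,(6,2,4)\}$. Without this relation nothing in your write-up bounds $n$ at all, so your assertion that the numerical system ``leaves only finitely many candidate pairs'' is unsupported --- and you yourself flag this bounding-and-exhaustion step as the expected obstacle, i.e.\ it is precisely the part of the proof you have not done.

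The secant-variety input is also misstated. The identification of the contracted divisor with (the strict transform of) $\Sec(\B)$ is valid only for quadratic transformations, since a line is contracted when it meets $\B$ in a scheme of length $\delta_1$; it is fine for the $n=5$ and $n=6$ cases (all of type $(2,\delta_2)$, and your Severi argument for the Veronese surface in case (3) is correct), but not for $n=4$, where $\delta_1\in\{3,4\}$. Moreover your claim that ``in the lower-codimension cases ($n=4,6$) the secant variety is automatically a hypersurface'' is backwards for $n=4$: a non-defective surface in $\PP^4$ has $\Sec(\B)=\PP^4$, so the contracted divisor there is a trisecant- or higher-order locus, not $\Sec(\B)$. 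So as written the proposal establishes case (3), sketches a plausible but uncompleted numerical framework for the rest, and rests the finiteness of the problem on an identity that the theorem's own cases contradict; repairing it requires replacing the false duality by \eqref{dimensionBaseLocus} and then genuinely carrying out the case analysis for the four resulting types.
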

Furthermore, when $\B$ has dimension $3$, 
we have the following possible cases (see \cite[Corollary~1]{crauder-katz-1991},
and \cite[Theorem~3.2]{ein-shepherdbarron} for the second part of the statement
in \eqref{casecrei} below):
\begin{proposition}\label{threefoldsCre}
 Let $\varphi$ be a special 
 Cremona transformation of $\PP^n$ whose base locus $\B$ has dimension $3$. Then 
 one of the following cases occurs:
 \begin{enumerate}[(1)]
  \item\label{casecrei} $n=5$, 
  $\varphi$ is of type $(5,5)$, 
 and $\B$ 
 is a degree $15$ determinantal threefold
 given by the 
  $5\times 5$ minors of a $5\times 6$ matrix of linear forms; 
  \item\label{cubicCase} $n=6$ and $\varphi$ is of type $(3,5)$;
  \item\label{quadraticCase} $n=8$ and $\varphi$ is of type $(2,5)$. 
 \end{enumerate}
\end{proposition}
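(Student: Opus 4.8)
The plan is to extract the entire trichotomy from adjunction on the graph of $\varphi$, and to treat the determinantal description in case~\eqref{casecrei} separately since it is not forced by numerics alone.

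First I would resolve $\varphi$ by blowing up its base locus. Write $\pi\colon X\to\PP^n$ for the blow-up of $\PP^n$ along $\B$, with exceptional divisor $E$ and $H=\pi^{*}\O_{\PP^n}(1)$. Then the linear system defining $\varphi$ becomes $|\delta_1 H-E|$ and induces the second morphism $\pi'\colon X\to\PP^n$, so that $H'=\pi'^{*}\O_{\PP^n}(1)=\delta_1 H-E$. Since $\varphi^{-1}$ is again a special Cremona transformation, $\pi'$ is the blow-up of $\PP^n$ along the smooth base locus $\B'$ of $\varphi^{-1}$, with exceptional divisor $E'$ and $H=\delta_2 H'-E'$. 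In particular $\{H,H'\}$ is a basis of $\mathrm{Pic}(X)\otimes\QQ$, and $E=\delta_1 H-H'$, $E'=\delta_2 H'-H$.

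Next I would compute the canonical class in two ways. Setting $c=n-1-\dim\B$ and $c'=n-1-\dim\B'$, the discrepancy formula for each of the two blow-ups gives
\begin{equation*}
 K_X=-(n+1)H+c\,E=-(n+1)H'+c'\,E'.
\end{equation*}
Substituting $E=\delta_1 H-H'$ and $E'=\delta_2 H'-H$ and comparing the coefficients of $H$ and of $H'$ yields the two numerical identities
\begin{equation*}
 c\,\delta_1+c'=n+1,\qquad c+c'\,\delta_2=n+1.
\end{equation*}
Specializing to $\dim\B=3$, so that $c=n-4$, the second identity becomes $c'\delta_2=5$; since $\delta_2\ge 2$ and $c'\ge 1$ (a Cremona base locus has codimension at least two), this forces $c'=1$ and $\delta_2=5$, explaining why the second degree is $5$ throughout. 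The first identity then reads $(n-4)\delta_1=n$, i.e. $\delta_1=1+4/(n-4)$, which is an integer $\ge 2$ precisely when $n-4\in\{1,2,4\}$. This gives exactly the three pairs $(n,\delta_1)=(5,5),(6,3),(8,2)$ of cases \eqref{casecrei}--\eqref{quadraticCase}, and incidentally shows that in each case the inverse transformation has a codimension-two base locus.

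The main obstacle is not this count but the input it rests upon: that the graph of $\varphi$ is smooth with both projections blow-ups along \emph{smooth} centres, equivalently that $\varphi^{-1}$ is itself special and that a single blow-up resolves $\varphi$. This structural fact, which is what makes the two expressions for $K_X$ legitimate, is the genuinely hard part, and I would invoke the general theory of Crauder and Katz \cite{crauder-katz-1991} for it rather than reproving it. Finally, the remaining assertion in \eqref{casecrei}, that $\B$ is the determinantal threefold cut out by the $5\times 5$ minors of a $5\times 6$ matrix of linear forms, lies beyond the numerics: here I would appeal to the detailed analysis of type $(5,5)$ transformations of $\PP^5$ by Ein and Shepherd-Barron \cite[Theorem~3.2]{ein-shepherdbarron}.
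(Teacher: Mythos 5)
Your Diophantine endgame is correct, and your final citations match the paper's: the paper offers no proof of this Proposition, quoting it from \cite[Corollary~1]{crauder-katz-1991}, with \cite[Theorem~3.2]{ein-shepherdbarron} for the determinantal description in case \eqref{casecrei}, exactly as you propose. Moreover the two identities $c\delta_1+c'=n+1$ and $c+c'\delta_2=n+1$ are true: they are precisely the two equalities of \eqref{dimensionBaseLocus}, i.e. \cite[Lemma~2.4]{ein-shepherdbarron}. The genuine gap is in how you derive them. You assume that $\varphi^{-1}$ is again special and that $X=\mathrm{Bl}_{\B}\,\PP^n$ is simultaneously the blow-up of the target $\PP^n$ along a smooth $\B'$, which is what legitimizes writing $K_X=-(n+1)H'+c'E'$. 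This is false in general, and provably false in precisely the cases \eqref{cubicCase} and \eqref{quadraticCase} you are deriving: under your assumption, your own computation gives $c'=1$, i.e.\ $\dim\B'=n-2$, whereas \cite[Theorem~3.2]{ein-shepherdbarron} applied to the special transformation $\varphi^{-1}$ would force $\dim\B'\leq n-3$ as soon as $n\geq6$. Taken at face value, your argument would therefore prove that cases \eqref{cubicCase} and \eqref{quadraticCase} cannot occur --- yet such transformations exist (Section \ref{sez examples} of the paper, and \cite{note3,hulek-katz-schreyer}). In truth, in those cases $\B'$ is singular and the second projection is not a smooth blow-up (compare Table \ref{table: classCremona}, which has no type $(5,3)$ entry in $\PP^6$ and no type $(5,2)$ entry in $\PP^8$); the ``structural fact'' you propose to import from \cite{crauder-katz-1991} is not in that paper, and cannot be, since it is false.

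The derivation that the literature actually uses assumes nothing about $\B'$, and it rescues your proof with a local change. Since $\mathrm{Pic}(X)=\ZZ H\oplus\ZZ E$ has rank $2$, the birational morphism $\pi'\colon X\rightarrow\PP^n$ contracts exactly one prime divisor $D$; the ramification formula gives $K_X=-(n+1)H'+aD$ with an unknown integer $a\geq1$, and pulling back the linear system defining $\varphi^{-1}$ gives $H=\delta_2H'-mD$ with an unknown integer $m\geq1$. Rewriting both relations in the basis $\{H,E\}$ yields $aD=(n+1)(\delta_1-1)H-(\dim\B+2)E$ and $mD=(\delta_1\delta_2-1)H-\delta_2E$, and proportionality of these two classes eliminates $a$, $m$ and $D$, giving $(\dim\B+2)(\delta_1\delta_2-1)=(n+1)(\delta_1-1)\delta_2$ --- the first equality of \eqref{dimensionBaseLocus}. (The second equality, the one you exploit via $c'\delta_2=5$, requires the further identification $a=\codim\B'-1$ and $m=1$, which needs an argument at the generic point of $\B'$.) Fortunately, for $\dim\B=3$ the first equality alone suffices: reducing $5(\delta_1\delta_2-1)=(n+1)(\delta_1-1)\delta_2$ modulo $\delta_2$ gives $\delta_2\mid5$, hence $\delta_2=5$; the equation then reads $n+1=5+4/(\delta_1-1)$, so $\delta_1-1\in\{1,2,4\}$ and $(n,\delta_1)=(8,2),(6,3),(5,5)$. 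With this substitution (or by simply citing \eqref{dimensionBaseLocus} as the paper does), the rest of your argument, including the appeal to \cite[Theorem~3.2]{ein-shepherdbarron} for case \eqref{casecrei}, is correct.
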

In \cite{note2}, we classified special  Cremona transformations as in 
case \eqref{quadraticCase} above, by showing the following:
\begin{theorem}[\cite{note2}]\label{thmQuadratic}
  Let $\varphi$ be  a special 
 quadratic Cremona transformation of $\PP^n$  whose base locus $\B$ has dimension $3$.
 Then $n=8$, $\varphi$ is of type $(2,5)$, and one of the two following cases occurs:
 \begin{enumerate}[(1)]
  \item\label{case25a} $\B$ is a scroll 
  over a rational surface $Y$ with $K_{Y}^2=5$,
  and it is embedded in $\PP^8$ as a threefold of 
  degree $12$ and sectional genus $6$;
  \item\label{case25b} $\B$ is  the blow-up of a Fano variety at one point, 
  embedded in $\PP^8$ as a threefold of degree $13$ and sectional genus $8$.
 \end{enumerate}
\end{theorem}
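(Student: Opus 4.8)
The first assertions require no new work: by Proposition~\ref{threefoldsCre}\eqref{quadraticCase}, a special quadratic Cremona transformation with three-dimensional base locus necessarily lives on $\PP^8$ and is of type $(2,5)$. The entire content therefore lies in identifying the smooth irreducible threefold $\B\subset\PP^8$, which is linearly normal and, being the base locus of a quadratic special Cremona transformation, projectively normal and scheme-theoretically cut out by the $9$-dimensional system $H^0(\I_\B(2))$ defining $\varphi$. I would record the invariants of $\B$ as the degree $d=H^3$, the sectional genus $g$ (so that $2g-2=(K_\B+2H)\cdot H^2$), the holomorphic Euler characteristic $\chi(\O_\B)$, and the Chern numbers $K_\B^3$, $K_\B^2\cdot H$, $c_2(\B)\cdot H$ and $c_3(\B)$.

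To produce relations among these invariants I would resolve $\varphi$ by blowing up the base locus. Let $\pi\colon X\to\PP^8$ be the blow-up along $\B$, with exceptional divisor $E$ and $h=\pi^*\O_{\PP^8}(1)$; then $\varphi$ is resolved by the free system $|2h-E|$, and its type translates into the two multidegree equations
\[
(2h-E)^8=1,\qquad (2h-E)^7\cdot h=5 .
\]
Expanding them by means of $\pi_*(E^k)=(-1)^{k-1}s_{k-5}(\N_{\B/\PP^8})$ and substituting the Segre classes of the normal bundle through $s(\N_{\B/\PP^8})=c(\T_\B)\,(1+H)^{-9}$, each equation becomes a polynomial relation among $d$, $K_\B\cdot H^2$, $K_\B^2\cdot H$, $c_2(\B)\cdot H$ and $c_3(\B)$. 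These two relations do not suffice on their own, so I would adjoin the cohomological constraints coming from the embedding: linear and projective normality give $h^0(\O_\B(1))=9$ and $h^0(\O_\B(2))=36$, which through the Hilbert polynomial
\[
\chi(\O_\B(t))=\frac{d}{6}t^3-\frac{K_\B\cdot H^2}{4}t^2+\frac{(K_\B^2+c_2(\B))\cdot H}{12}t+\chi(\O_\B)
\]
yield two further relations. Finally, a general curve section $C=\B\cap\PP^5$ is a smooth linearly normal curve of degree $d$ and genus $g$ in $\PP^5$, so the Castelnuovo bound controls $g$ from above, while positivity of the intermediate multidegrees $(2h-E)^i\cdot h^{8-i}$ controls it from below. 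I expect the resulting Diophantine system to admit exactly the two solutions $(d,g)=(12,6)$ and $(d,g)=(13,8)$.

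With $(d,g)$ determined I would pass to adjunction theory, reading the structure of $\B$ off the adjoint system $|K_\B+2H|$. In the case $(d,g)=(12,6)$ I expect $K_\B+2H$ to be the pullback of an ample class under a morphism onto a smooth surface $Y$; concretely, the adjunction map should contract a one-dimensional family of lines and exhibit $\B$ as a scroll $\PP(\E)\to Y$ with $\O_{\PP(\E)}(1)=H$. Re-expressing $d$, $g$, $c_2(\B)\cdot H$ and $\chi(\O_\B)$ in terms of the invariants of $(Y,\E)$ should then force $Y$ rational with $K_Y^2=5$, giving case~\eqref{case25a}. In the case $(d,g)=(13,8)$ the adjunction analysis should instead isolate a single divisor $\cong\PP^2$ with normal bundle $\O(-1)$; contracting it realizes $\B$ as the blow-up $\mu\colon\B\to\B'$ of a threefold at one point, and checking that $-K_{\B'}$ is ample of the correct degree identifies $\B'$ as a Fano threefold, which is case~\eqref{case25b}. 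The classification of smooth polarized threefolds of low degree and sectional genus is what converts the numerical data into these two structural descriptions.

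The decisive difficulties are two. The first is tightening the numerical step so that precisely the two tuples above survive: the intersection-theoretic and Riemann--Roch identities leave several a~priori admissible candidates, and eliminating the spurious ones calls for extra input---sharper genus bounds for threefolds in $\PP^8$, nonnegativity of the cohomology of the adjoint bundles, and the requirement that $\B$ be smooth, irreducible and nondegenerate with the prescribed behaviour of $\Sec(\B)$ under $\varphi$. The second is upgrading the numerical coincidence to a genuine geometric identification: one must verify that the adjunction morphism really contracts the fibres to a surface in the first case and a single exceptional $\PP^2$ in the second, and then confirm that both cases are actually realized, i.e.\ construct explicitly the system of quadrics through a scroll over a quintic del Pezzo surface and through the blow-up of a suitable Fano threefold, and check that these systems do define special Cremona transformations of type $(2,5)$. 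This \emph{existence} half is, in my view, the genuine obstacle.
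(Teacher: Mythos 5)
This theorem is not proved in the paper at all: it is imported verbatim from \cite{note2}, so the only meaningful benchmark is the method the paper itself develops for the parallel cubic case (Theorem~\ref{MainTheorem}). Your opening reduction (that $n=8$ and the type is $(2,5)$ follow from Proposition~\ref{threefoldsCre}) is correct, and your general framework --- invariants via Segre classes and multidegrees, Hilbert-polynomial constraints from vanishing, then adjunction theory --- is indeed the right family of tools. But what you have written is a plan, not a proof, and the plan as stated breaks at the elimination step. First, your numerical system is underdetermined in a way you do not notice: for a threefold in $\PP^8$ (codimension $5$) there is no analogue of the self-intersection relation $c_3(\N_{\B,\PP^6})=\lambda^2$, i.e.\ of the double point formula \eqref{doublePointFormula}, which in the paper's $\PP^6$ case is exactly what closes the linear system of Lemma~\ref{lemmaChern}. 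Your four relations (two multidegree equations, two Hilbert-polynomial values) determine $c_2\cdot H$, $c_3$, $K_{\B}^2\cdot H$ and $\chi(\O_{\B})$ as functions of $(d,g)$, but $K_{\B}^3$ never enters any of them and remains a free parameter; since the adjunction-theoretic case analysis must use the pluridegrees $(K+H)^3$, $(K+H)^2H$, \dots\ (and the number $\nu$ of exceptional divisors of a reduction), your final dichotomy cannot even be formulated over pairs $(d,g)$ alone. Second, the inequalities you invoke --- Castelnuovo's bound plus nonnegativity of the intermediate multidegrees --- are far too weak. In the paper's $\PP^6$ case, which enjoys the extra closing relation, that kind of input still leaves hundreds of candidates; even after adding the Livorni--Sommese inequalities and the Cremona inequalities of Fact~\ref{factCremona}, Lemma~\ref{lemma extreme} still retains $33$ pairs, and cutting down to two cases requires the full battery of pluridegree inequalities of Fact~\ref{LogGeneralIneq} \emph{and} Le Barz's quadrisecant formula (Fact~\ref{multisecants}). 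That last tool is specific to surfaces in $\PP^5$ and has no counterpart for hyperplane sections of $\B\subset\PP^8$, which are surfaces in $\PP^7$; your sketch offers no substitute, so the assertion ``I expect the resulting Diophantine system to admit exactly the two solutions'' is precisely the missing proof, not a step of it.

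There is also a misreading of what the statement claims. The theorem asserts only the dichotomy of possible base loci; existence of examples is \emph{not} part of it, and the paper handles existence separately (case~\eqref{case25a} in \cite{note3}, case~\eqref{case25b} in \cite{hulek-katz-schreyer}). So the ``genuine obstacle'' you single out at the end --- constructing the quadrics through a scroll or through a blown-up Fano threefold --- is not required for this statement, while the step you would actually need, namely the numerical elimination and the verification that the adjunction morphism has the claimed structure in each surviving case (both of which you defer with ``I expect'' and ``should''), is exactly where the content of \cite{note2} lies.
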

In \cite{note3}, 
we  exhibited an example of Cremona transformation as in case \eqref{case25a} above, while  examples 
 of transformations as in  case \eqref{case25b} 
had been constructed in \cite{hulek-katz-schreyer}.
The main aim of this paper is to prove Theorem \ref{MainTheorem} below,
which describes the transformations as in case \eqref{cubicCase} 
 of Proposition \ref{threefoldsCre}, concluding the classification of special Cremona
 transformations whose base locus has dimension three.
\begin{theorem}\label{MainTheorem}
 Let $\varphi$  be a special 
 cubic Cremona transformation of $\PP^n$ whose base locus $\B$ has dimension $3$.
 Then $n=6$, $\varphi$ is of type $(3,5)$, and one of the two following cases occurs:
\begin{enumerate}[(A)]
 \item\label{LogGeneralCaseThm} $\B$ is a threefold of degree $14$, sectional genus $15$, 
 and it is Pfaffian (i.e. given by the Pfaffians of a skew-symmetric matrix) 
 with trivial canonical bundle;
 \item\label{FibrationCaseThm} $\B$ is a conic bundle over $\PP^2$, embedded in $\PP^6$
 as a threefold of degree $13$ and sectional genus~$12$.
\end{enumerate}
 \end{theorem}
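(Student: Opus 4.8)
The plan is to resolve $\varphi$ by a single blow-up and then combine the resulting numerical identities with adjunction theory. By Proposition \ref{threefoldsCre} we may assume from the outset that $n=6$ and $\varphi$ is of type $(3,5)$. Let $\pi\colon X\to\PP^6$ be the blow-up of $\PP^6$ along $\B$, with exceptional divisor $E$ and $H=\pi^*\O_{\PP^6}(1)$. Since $\varphi$ is defined by cubics vanishing on the smooth threefold $\B$, the composite $\psi=\varphi\circ\pi$ is the morphism given by $|3H-E|$, birational onto $\PP^6$; write $H'=\psi^*\O_{\PP^6}(1)=3H-E$. Comparing canonical classes, $K_X=\pi^*K_{\PP^6}+2E=-7H+2E$ while $\psi^*K_{\PP^6}=-7H'=-21H+7E$, so the ramification divisor $R=K_X-\psi^*K_{\PP^6}=14H-5E$ is effective and $\psi$-exceptional. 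From birationality and from the fact that $R$ is contracted I record the two robust relations $(H')^6=1$ and $(H')^5\cdot H=5$, the latter being the second multidegree of $\varphi$, i.e. the degree $\delta_2=5$ of the inverse.

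Next I would express the intersection numbers $H^{6-j}E^{j}$ through the Segre and Chern classes of the normal bundle $\N=\N_{\B/\PP^6}$: one has $H^{6-j}E^{j}=0$ for $j=1,2$, then $H^3E^3=d:=\deg\B$, and $H^2E^4=H^2\cdot c_1(\N)=5d+2g-2$, using $c_1(\N)=7H+K_\B$ and the sectional-genus adjunction $K_\B\cdot H^2=2g-2-2d$; similarly $HE^5$ and $E^6$ are expressed in terms of $c_2(\N)\cdot H$ and $c_3(\N)$, hence in terms of $\lambda=c_2(\B)\cdot H$, $\chi(\O_\B)$ and $c_3(\B)$. Substituting into $(H')^6=1$ and $(H')^5\cdot H=5$ gives two polynomial relations among $d,g,\lambda,\chi(\O_\B),c_3(\B)$. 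To close the system I would add Riemann--Roch on $\B$, the hypothesis that $\B$ is cut out by exactly the seven cubics defining $\varphi$, which after establishing projective normality of $\B$ reads $\chi(\O_\B(3))=\binom{9}{3}-7=77$, and the non-negativity and boundedness constraints furnished by adjunction theory for the polarized threefold $(\B,H)$. I expect this to reduce the admissible invariants to a finite list, from which the surviving smooth cases are exactly $(d,g)=(14,15)$, with $K_\B\cdot H^2=0$, and $(d,g)=(13,12)$, with $K_\B\cdot H^2=-4$.

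To pass from numerics to geometry I would analyse the adjoint system and the Mori structure of $(\B,H)$. In the first case the invariants give $K_\B\cdot H^2=0$, which the adjunction analysis upgrades to $K_\B\equiv 0$; thus $\B$ is a smooth threefold of degree $14$ and sectional genus $15$ with trivial canonical bundle, and being subcanonical of codimension $3$ in $\PP^6$ it is arithmetically Gorenstein. The Buchsbaum--Eisenbud structure theorem for codimension-three Gorenstein ideals then presents the homogeneous ideal of $\B$ by the sub-Pfaffians of a skew-symmetric matrix of linear forms, giving case \eqref{LogGeneralCaseThm}, the seven cubic Pfaffians being precisely the forms defining $\varphi$. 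In the second case $K_\B$ is not nef, and I would identify the extremal contraction: the nef-value computation exhibits a morphism $\B\to\PP^2$ whose general fibre is a conic, realizing $\B$ in $\PP^6$ as a threefold of degree $13$ and sectional genus $12$, which is case \eqref{FibrationCaseThm}. Excluding the other contraction types and checking that the base is $\PP^2$ would be done using the already-determined invariants $d,g,\lambda,\chi(\O_\B)$.

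The main obstacle is twofold. First, the two Cremona identities alone do not determine $(d,g)$: pinning the invariants down requires the finer input---projective normality of $\B$, the precise cohomology of $\I_\B$, and the vanishing and positivity statements of adjunction theory---together with a careful elimination to discard the spurious numerical solutions, and this is where most of the work lies. Second, in case \eqref{LogGeneralCaseThm} one must genuinely establish that $\B$ is arithmetically Gorenstein, so that the Pfaffian presentation applies, and in case \eqref{FibrationCaseThm} one must verify rigorously that the relevant extremal contraction is a conic bundle over $\PP^2$; these two geometric identifications, rather than the intersection-number bookkeeping, are the crux of the argument.
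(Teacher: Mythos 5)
Your overall framework (resolving $\varphi$ on the blow-up, the multidegree identities $(H')^6=1$ and $(H')^5H=5$, Segre--Chern bookkeeping for $\N_{\B/\PP^6}$, Riemann--Roch plus $\chi(\O_{\B}(3))=77$, then adjunction theory) matches the paper's first steps, but there is a genuine gap exactly at the point you yourself flag as ``where most of the work lies'': the tools you list do not suffice to carry out the elimination. In the paper, after imposing precisely the constraints you propose (cohomology of $\I_{\B,\PP^6}$, the Hodge-type inequalities on the projective degrees, the Livorni--Sommese threefold inequalities, and the full list of pluridegree inequalities for log-general reductions), the surviving possibilities are \emph{not} just $(\lambda,g)=(14,15)$ and $(13,12)$: the log-general branch still contains $(\lambda,g,\nu)=(18,27,0)$, and the conic-bundle branch is constrained only by the relation $\nu=\lambda^2-199\lambda+62g+1674$, with $(\lambda,g)$ not yet determined. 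The decisive extra ingredient in the paper is Le Barz's enumerative formula for $4$-secant lines, applied to the general hyperplane section $S\subset\PP^5$ of $\B$: since $S$ is cut out by cubics it has no $4$-secant lines, and since exceptional lines have self-intersection $-1$ the formula yields the upper bound \eqref{patrick4Ineq} on $\nu$; it is this bound that kills $(18,27,0)$ and pins the conic-bundle case down to $(\lambda,g,\nu)=(13,12,0)$. Nothing in your proposal plays this role, so your elimination cannot close.

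There is also a gap in your treatment of case (A). First, passing from $K_{\B}\cdot H_{\B}^2=0$ to $K_{\B}\equiv 0$ needs an argument: the paper uses $K_{\B}^2\cdot H_{\B}=0$ as well, applies the Hodge index theorem on an irreducible member of $|mH_{\B}|$ through a given curve, and then Kawamata's theorem plus a Livorni--Sommese result to upgrade numerical triviality to triviality of $K_{\B}$. More seriously, your step ``subcanonical of codimension $3$ in $\PP^6$, hence arithmetically Gorenstein'' is false as stated: arithmetic Gorensteinness requires arithmetic Cohen--Macaulayness in addition to subcanonicality, and nothing you have established gives ACM, so Buchsbaum--Eisenbud does not yet apply. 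The paper avoids this by invoking Walter's theorem on Pfaffian subschemes, which applies directly to subcanonical codimension-$3$ subvarieties. Your identification of the conic bundle and its base in case (B) is in the same spirit as the paper's (which uses $h^0(K_{\R}+H_{\R})=3$ and $d_2=2$ together with a result of Besana--Biancofiore to get $\PP^2$), and would be fine once the numerics $(13,12,0)$ are actually established.
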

 An example as in the first case of 
 Theorem \ref{MainTheorem} 
 can be obtained by taking 
 the Pfaffians of the principal $6\times 6$ minors 
 of a general $7\times 7$ skew-symmetric matrix of linear forms on $\PP^6$, while 
 an example as in the second case is a bit more complicated; see Section \ref{sez examples}.

From another point of view, if $\varphi$ 
is a special Cremona transformation of $\PP^n$ and $n\leq6$, then
the dimension of its base locus is at most three 
(see \cite[Theorem~3.2]{ein-shepherdbarron}). Thus  
Theorem \ref{MainTheorem} also 
concludes the classification of the special Cremona transformations 
of $\PP^n$ with $n\leq6$. 
We know no examples of special Cremona transformations of $\PP^7$, 
but the second main result of this paper is the following:
  \begin{theorem}\label{CremonaP7}
 Let $\varphi$ be a special Cremona transformation of $\PP^7$.
Then  $\varphi$ is of type $(3,3)$  and  
its base locus is a fourfold of degree $12$ and sectional genus $10$, which
is a fibration over $\PP^1$ whose generic fibre is a sextic del Pezzo threefold.
\end{theorem}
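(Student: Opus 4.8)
The plan is to pin down the numerical type and the dimension of $\B$ by a Chern-class computation on the resolution, and then to read off the geometry of $\B$ from adjunction theory. First I would invoke the bound of Ein and Shepherd-Barron \cite[Theorem~3.2]{ein-shepherdbarron}: a special Cremona transformation of $\PP^7$ has $\dim\B\le 4$, i.e. $\codim\B\ge 3$. Let $\pi\colon X\to\PP^7$ be the blow-up of $\B$, with exceptional divisor $E$ and $H=\pi^{*}\O_{\PP^7}(1)$. As for the transformations of Proposition~\ref{threefoldsCre}, $\varphi$ is resolved by $\pi$, the induced morphism being defined by $|\delta_1 H-E|$ and realizing $X$ as the blow-up of the target along the (smooth) base locus $\B'$ of $\varphi^{-1}$, so that $H'=\delta_1 H-E$ and $E'=\delta_2 H'-H$. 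Writing $K_X=-8H+(\codim\B-1)E$ and, through the second contraction, $K_X=-8H'+(\codim\B'-1)E'$, then eliminating $E,E'$ and comparing coefficients, I obtain
\[
(\codim\B-1)\,\delta_1+(\codim\B'-1)=8,\qquad (\codim\B-1)+(\codim\B'-1)\,\delta_2=8 .
\]

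The only solutions of this system with $\delta_1,\delta_2\ge 2$ are $(\delta_1,\delta_2)=(3,3)$ with $\codim\B=\codim\B'=3$, and $(\delta_1,\delta_2)=(7,7)$ with $\codim\B=\codim\B'=2$; since the latter forces $\dim\B=5$, it is excluded by the dimension bound. Thus $\varphi$ is of type $(3,3)$ and $\B$ is a fourfold. With the type fixed, I would determine $\deg\B$ and the sectional genus $g$ from the remaining conditions for $\varphi$ to be birational, chiefly $(\delta_1 H-E)^{7}=1$, expanded through the Segre classes of $\N_{\B/\PP^7}$, supplemented by Riemann--Roch, the double-point formula, and the cohomological constraint $h^{0}(\I_\B(3))=8$ (whence $\chi(\O_\B(3))=112$, assuming the expected vanishings $h^{i}(\I_\B(3))=0$ for $i>0$). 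I expect this system in the numerical characters of $\B$ to have the unique solution $\deg\B=12$ and $g=10$.

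It then remains to identify $\B$, and here the plan is to use adjunction theory. From $2g-2=(K_\B+3H)\cdot H^{3}$ one gets $K_\B\cdot H^{3}=-18$, hence $(K_\B+2H)\cdot H^{3}=6>0$. I would show, via the Beltrametti--Sommese--Fujita classification of polarized fourfolds, that $K_\B+2H$ is nef of numerical dimension one; the associated morphism is then a fibration $\B\to C$ over a smooth curve whose general fibre $F$ satisfies $-K_F=2H_F$, that is, a del Pezzo threefold, of degree $H_F^{3}=(K_\B+2H)\cdot H^{3}=6$. Finally, the vanishing $h^{1}(\O_\B)=0$, forced by the cohomology of a Cremona base locus, gives $C\cong\PP^1$, completing the description.

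The main obstacle is precisely this last step: one must prove that the invariants \emph{force} the del Pezzo fibration, not merely that they are compatible with it. This means establishing that $K_\B+2H$ is nef with $(K_\B+2H)^{2}\equiv 0$, which requires computing all the relevant Chern numbers of $\B$ and then eliminating every competing possibility in the adjunction-theoretic classification (that $(\B,H)$ be $\PP^4$ or a quadric, a scroll over a threefold, a quadric fibration over a surface, a del Pezzo fourfold, or that its adjoint contraction be of a different type). Since no special Cremona transformation of $\PP^7$ is known, none of this can be checked against an example, so every alternative has to be ruled out numerically.
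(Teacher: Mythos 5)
Your first step is sound: the two relations you get by comparing the two expressions for $K_X$ are precisely equivalent to \eqref{dimensionBaseLocus}, and together with the bound $\dim\B\le 4$ of \cite[Theorem~3.2]{ein-shepherdbarron} they force $(\delta_1,\delta_2)=(3,3)$ and $\codim\B=\codim\B'=3$, exactly as in the paper (one caveat: your derivation presumes that the blow-up of $\PP^7$ along $\B$ is simultaneously the blow-up of the target along a \emph{smooth} $\B'$, which is not known a priori since the inverse of a special transformation need not be special; this is harmless only because the relations are proved in \cite[Lemma~2.4]{ein-shepherdbarron} without that assumption). The genuine gap is your second step. The constraints you list --- birationality $(3H-E)^7=1$, Riemann--Roch, the double point formula, $h^0(\I_{\B,\PP^7}(3))=8$ --- do \emph{not} single out $(\lambda,g)=(12,10)$: they only express the Hilbert polynomial, the Segre classes of $\N_{\B,\PP^7}$, and all projective degrees as functions of the two unknowns $(\lambda,g)$, as in \eqref{multidegreeCremonaP7}. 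Even after imposing every available inequality (the Cremona inequalities \eqref{essentialCremonaIneq}, Castelnuovo's bound, and the Livorni--Sommese inequalities \eqref{LiSoIneq1}--\eqref{LiSoIneq4} applied to a hyperplane section), the paper still has $27$ surviving pairs, from $(8,2)$ to $(18,28)$. The pair $(12,10)$ emerges only \emph{simultaneously} with the fibration structure: log-general type is excluded because its unique surviving triple $(18,28,0)$ has pluridegrees $(18,36,72,102)$, contradicting \cite[Lemma~1.1]{beltrametti-biancofiore-sommese-loggeneral}; the remaining adjunction cases force $d_3=0$, i.e. $\nu=\lambda^2-199\lambda+62g+1624$, and only then does $(\lambda,g,\nu)=(12,10,0)$ survive, with $d_2=0$ killing the conic-bundle alternative. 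So the order of your plan --- first pin down the invariants, then identify the structure --- cannot be carried out; the two steps are necessarily intertwined, and you have (correctly) flagged the case elimination as the main obstacle without supplying it.

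Two tools essential for that elimination are missing from your plan. First, you run adjunction theory directly on the fourfold $\B$ but never introduce a reduction: a priori $(\B,H)$ need not be minimal, an unknown number $\nu$ of exceptional divisors enters every pluridegree, and the inequality machinery of Fact~\ref{LogGeneralIneq} is stated for threefold reductions. The paper sidesteps the need for fourfold analogues by restricting $\varphi$ to a general $\PP^6\subset\PP^7$: the restriction is a special cubic transformation onto a cubic hypersurface whose base locus is the smooth threefold $X=\B\cap\PP^6$, and the entire apparatus of the proof of Theorem~\ref{MainTheorem} (threefold adjunction, reductions, pluridegree inequalities) is rerun on $X$. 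Second, and decisively, you have no upper bound on $\nu$: in the paper this is supplied by Le Barz's $4$-secant formula (Fact~\ref{multisecants}) applied to the surface section $S\subset\PP^5$, using that $S$ is cut out by cubics and hence has no $4$-secant lines; without the resulting bound \eqref{multisecantCuboCubic} the triple $(18,28,102)$ in the del Pezzo-fibration case, for instance, cannot be excluded. Finally, a smaller gap at the end: even granting a fibration $\pi:\B\rightarrow C$ with $K_{\B}+2H\approx\pi^{\ast}\mathcal{L}$, your conclusion $H_F^3=6$ requires $\deg\mathcal{L}=1$ (otherwise $H_F^3$ could be $3$, $2$, or $1$); this must be proved, as in the paper, from $\chi(\O_X)$ and $\chi(\O_X(-H_X))$, and is not automatic from $(K_{\B}+2H)\, H^3=6$.
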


For the convenience of the reader, in Table \ref{table: classCremona} below 
we summarize the classification of the special Cremona transformations 
of $\PP^n$ with either $n\leq7$ or dimension 
of the base locus $r\leq3$. 
 \begin{table}[htbp]
\centering
\begin{tabular}{|c|c|c|c|l|}
\hline
  & $r$ & $n$ & Projective degrees &   Description of the base locus   \\  
\hline 
I & \multirow{2}{*}{$1$} & $3$  & $3, 3$ &  determinantal curve   \\
II & & $4$  & $2, 4, 3$ &  quintic elliptic curve   \\
\hline 
III & \multirow{5}{*}{$2$} & $4$  & $3, 4, 2$ &  quintic elliptic scroll in lines   \\
IV & & $4$  & $4, 6, 4$ &   determinantal surface   \\
V & & $5$  & $2, 4, 4, 2$ &   Veronese surface   \\
VI & & $6$  & $2, 4, 8, 9, 4$ &  septic elliptic scroll in lines   \\
VII & &$6$  & $2, 4, 8, 8, 4$ &   rational octic surface   \\
\hline 
VIII & \multirow{5}{*}{$3$} &$5$  & $5, 10, 10, 5$ &   determinantal threefold  \\
IX & &$8$  & $2, 4, 8, 16, 20, 14, 5$ &   scroll over rational surface with $K^2=5$   \\
X  & &$8$  & $2, 4, 8, 16, 19, 13, 5$ &  Fano threefold with one point blown up  \\
XI  & &$6$  & $3, 9, 13, 11, 5$ &   Pfaffian threefold   \\ 
 XII  & &$6$  &  $3, 9, 14, 12, 5$ &   conic bundle over a plane   \\
\hline
\rowcolor{Gray}
 XIII  & \multirow{1}{*}{$4$}  & $7$  &  $3$, $9$, $15$, $15$, $9$, $3$ &  del Pezzo fibration over a line \\
\hline
\end{tabular}
 \caption{Special Cremona transformations of $\PP^n$ with either $n\leq7$
 or $r\leq3$.} 
\label{table: classCremona} 
\end{table}

 The outline of the paper is as follows. In Section \ref{prelim sec},
 we give some background information; in particular, we recall 
  some  adjunction-theoretic results.
  
In Section \ref{section: cubic}, we prove Theorem \ref{MainTheorem},
and 
we will proceed
essentially in four steps. Firstly, we 
compute the numerical invariants of $\varphi$ and $\B$ as functions 
of $\lambda=\deg(\B)$ and of the sectional genus $g$ of $\B$. In particular,
we determine the Hilbert polynomial of $\B$ (Lemma \ref{hilbertPolB}), 
the degrees of the Chern and Segre 
classes of the tangent and normal bundle of $\B$ (Lemma \ref{lemmaChern}), 
and the projective degrees of $\varphi$ (Lemma \ref{CremonaDegrees}).
Secondly, 
applying general inequalities for the invariants of threefolds and 
for the projective degrees of Cremona transformations,
we reduce drastically the number of pairs $(\lambda,g)$
for which such a transformation may exist
(see Lemma \ref{lemma extreme}).
Thirdly, we apply adjunction theory to deduce a first rough result 
(Proposition~\ref{casiPossi}):  either $(\B,H_{\B})$ is minimal of log-general type, 
or it admits a reduction to a conic bundle 
 over a surface.
Four and last, we refine our result by applying to the general 
 hyperplane section  of $\B$ a formula, due to P. Le Barz, 
concerning the number of $4$-secants of surfaces in $\PP^5$.

In Section \ref{sec cubo-cubic},
we prove Theorem \ref{CremonaP7},
basically by altering slightly 
the proof of Theorem \ref{MainTheorem} 
 in order to study 
 the restriction of a special Cremona transformation 
 of $\PP^7$ to a general hyperplane $\PP^6\subset\PP^7$;
 indeed one has that this restriction is  a cubic transformation 
 of $\PP^6$ (into a hypersurface of $\PP^7$)
  with a smooth irreducible threefold as base locus.
 
In Section \ref{sez examples}, among others examples,
we explain how to construct an example of Cremona transformation 
 as in case~(\ref{FibrationCaseThm}) of Theorem \ref{MainTheorem}.   
For this, we will use {\sc Macaulay2} \cite{macaulay2} with the \href{https://github.com/Macaulay2/M2/blob/master/M2/Macaulay2/packages/Cremona.m2}{\emph{Cremona}} package \cite{packageCremona}.
 
We point out that some proofs in this paper  
are reduced to purely mechanical procedures of 
finding out the elements of a given
finite set of tuples of integers that satisfy certain given relations. 
So a computer is very helpful, although not indispensable.

\section{Preliminaries}\label{prelim sec}
\subsection{Overview on adjunction theory for threefolds}\label{adjunctionThery}
Since some adjunction theoretic results are central for us, 
in this subsection
we summarize them for the convenience of the reader.
For details, proofs, and history on these results, we refer to the papers 
\cite{fujita-polarizedvarieties,beltrametti-sommese,ionescu-smallinvariants,ionescu-adjunction,sommese-adjunction-theoretic,sommese-adjunction-mapping,beltrametti-biancofiore-sommese-loggeneral} 
       and references therein.
 As usual, we do not distinguish between 
line bundles and divisors. 

Let $X$ be a smooth irreducible threefold,
$H$ a very ample divisor on $X$, and denote by
$K$ the canonical divisor of $X$.
The degree of the polarized pair $(X,H)$ is  $H^3$, and the sectional genus $g=g(X,H)$ 
is defined by $2g-2=(K+2H)H^2$. 

A pair $(X',H')$, where $X'$ is a smooth irreducible threefold and 
$H'$ an ample divisor on $X'$, is said to be a \emph{reduction} of $(X,H)$ if 
there is a morphism $\pi:X\rightarrow X'$ expressing $X$ 
as the blowing-up of $X'$ at a finite 
number of distinct points $p_1,\ldots,p_{\nu}$ (not 
infinitely near),
and moreover, denoting by  $E_i=\pi^{-1}(p_i)$ the exceptional divisors 
and by $K'$ the canonical divisor of $X'$, we have 
 $H\approx\pi^{\ast}H'-\sum_{i=1}^{\nu} E_i$ 
or equivalently $K+2H\approx\pi^{\ast}(K'+2H')$.
Of course, we have $H'^3=H^3+\nu$ and $g(X',H')=g(X,H)$. 
When $K+2H$ is nef and big,
there is a reduction $(X',H')$, uniquely determined by $(X,H)$, 
such that  $K'+2H'$ is ample. So, in this case, we refer to $(X',H')$ 
as \emph{the (minimal) reduction} of $(X,H)$, and 
denote by $\nu=\nu(X,H)$ the number of points blown up on $X'$.

Consider the complete linear system $|K+2H|$ on $X$. It is base-point free 
unless $(X,H)$ is one of the following:
 $(\PP^3,\O_{\PP^3}(1))$, 
the quadric $(Q^3,\O_{Q^3}(1))$, or a scroll over a curve. 
The map $\Phi=\Phi_{|K+2H|}$, when defined, is called \emph{the adjunction map}, and 
we write $\Phi=\sigma\circ \hat{\Phi}$ for the Remmert-Stein factorization of $\Phi$;
so $\hat{\Phi}$ is a morphism with connected fibers 
onto a normal variety, and 
$\sigma$ is a finite map.
\begin{fact}\label{listaCasi}
 If the adjunction map $\Phi$ is defined, then there are the following possibilities:
 \begin{enumerate}[(I)]
  \item $\dim\Phi(X)=0$ and 
  $(X,H)$ is a del Pezzo variety, i.e. a Fano variety of coindex $2$;
  \item $\dim\Phi(X)=1$ and $\hat{\Phi}$ expresses $X$ as a
  quadric fibration over a curve (i.e. 
  any closed fibre of $\hat{\Phi}$ is embedded as a quadric surface by the restriction of $H$);
  \item $\dim\Phi(X)=2$ and $\hat{\Phi}$ expresses $X$ as a scroll over a smooth surface 
  (i.e. any fibre of $\hat{\Phi}$ is embedded as a line by the restriction of $H$);
  \item $\dim\Phi(X)=3$, $X':=\hat{\Phi}(X)$ is smooth, $H':=\hat{\Phi}_{\ast}(H)$ is ample, and $(X',H')$ 
  is the minimal reduction of $(X,H)$; moreover $K'+2H'$ 
  is very ample ($K'$ stands for the canonical divisor of $X'$).
 \end{enumerate}
\end{fact}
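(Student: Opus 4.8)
The plan is to treat this as a recollection of the standard structure theorem for the adjunction map of a polarized threefold, and to organize the argument around the Remmert--Stein factorization together with a case analysis on $d:=\dim\Phi(X)$. Since $\Phi$ is assumed to be defined, $|K+2H|$ is base-point-free, so $\Phi$ is a genuine morphism; writing $\Phi=\sigma\circ\hat{\Phi}$ with $\hat{\Phi}\colon X\to X'$ having connected fibres onto a normal variety and $\sigma$ finite, we have $d=\dim X'\in\{0,1,2,3\}$. The unifying observation is that on every fibre $F$ of $\hat{\Phi}$ the divisor $K+2H$ restricts trivially, because $\O_X(K+2H)=\Phi^{\ast}\O(1)=\hat{\Phi}^{\ast}\bigl(\sigma^{\ast}\O(1)\bigr)$ and $\hat{\Phi}$ contracts $F$; thus the geometry of the fibres can be read off by applying adjunction to the relation $(K+2H)|_{F}=0$. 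I would then dispose of the four values of $d$ in turn, leaving the birational case $d=3$ for last, as it is the only genuinely hard one.

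For $d=0$ the base-point-free system $|K+2H|$ maps $X$ to a point, forcing $K+2H\approx 0$; hence $-K\approx 2H$ and $(X,H)$ is a Fano threefold of index $2$, that is of coindex $3+1-2=2$, which is precisely the del Pezzo case (I). For $d=1$ a general fibre $F$ is a surface with $F^{2}=0$ and trivial normal bundle, so adjunction gives $K_{F}=(K+F)|_{F}=K|_{F}=-2\,(H|_{F})$; thus $-K_{F}\approx 2\,(H|_{F})$ with $H|_{F}$ very ample, and the only such polarized surface is a quadric surface (embedded by half its anticanonical class), giving the quadric fibration (II). For $d=2$ a general fibre is a smooth curve $C$ with $N_{C/X}\cong\O_{C}^{\oplus 2}$, so $\deg K_{C}=K\cdot C$; combining $2g(C)-2=K\cdot C$ with $(K+2H)\cdot C=0$ yields $g(C)=1-H\cdot C$, and since $H\cdot C\geq 1$ the nonnegativity of $g(C)$ forces $H\cdot C=1$ and $g(C)=0$. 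Hence the general fibre is a line, giving the scroll (III). In each of these three cases a little extra care, using the very ampleness of $H$ and semicontinuity, upgrades the assertion from the general fibre to \emph{all} closed fibres, as stated.

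The main obstacle is the birational case $d=3$, where $\hat{\Phi}\colon X\to X'$ is birational onto a normal threefold and $K+2H=\hat{\Phi}^{\ast}(K'+2H')$ exhibits $X'$ as a candidate reduction. Here I cannot argue fibrewise; instead I would invoke Mori theory. Since $K+2H$ is nef and big, I analyze the extremal contractions it supports: the very ampleness of $H$ excludes fibre-type contractions (we are in the birational range), and a smooth threefold admits no small extremal contractions, so every contracted ray is divisorial. The classification of divisorial extremal contractions on a smooth threefold, together with the reduction condition $H\approx\hat{\Phi}^{\ast}H'-\sum E_i$, identifies each contracted divisor as a $\PP^{2}$ with normal bundle $\O_{\PP^2}(-1)$ mapped to a smooth point; one checks on such a $\PP^2$ that the contracted lines $\ell$ satisfy $K\cdot\ell=-2$ and $H\cdot\ell=1$, consistently with $(K+2H)\cdot\ell=0$. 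This shows that $\hat{\Phi}$ is the blow-down of finitely many disjoint such divisors to distinct (not infinitely near) points, so $X'$ is smooth, $\hat{\Phi}$ realises the minimal reduction of $(X,H)$, and $H'=\hat{\Phi}_{\ast}H$ is ample with $K'+2H'$ ample.

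Finally, the \emph{very} ampleness of $K'+2H'$ in case (IV) is not formal: it follows from a Reider-type argument applied on the reduction $X'$. This last step, together with the underlying contraction theorem, is exactly what compels the appeal to the cited literature, and is the reason I would record the result as a \emph{Fact} rather than reprove it in full.
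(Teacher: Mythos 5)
The paper offers no proof of this statement: it is recorded as a \emph{Fact} in the background section, with details and proofs explicitly deferred to the cited adjunction-theory literature (Fujita, Ionescu, Sommese, Beltrametti--Sommese). Your reconstruction is therefore a genuinely different route --- an actual proof sketch where the paper only cites --- and it is a sound one, essentially the modern Mori-theoretic proof found in those references. The fibrewise reductions are correct: for $d=0$, base-point-freeness forces the unique member of $|K+2H|$ to be the zero divisor, so $-K\approx 2H$; for $d=1$, adjunction on a general fibre $F$ gives $-K_F\approx 2(H|_F)$ with $H|_F$ very ample, and one should make explicit the step you leave implicit, namely that among del Pezzo surfaces only $\PP^1\times\PP^1$ (degree $8$, $H_F^2=2$) has anticanonical class divisible by $2$ in the Picard group, which singles out the smooth quadric; for $d=2$ the computation $g(C)=1-H\cdot C\geq 0$ forcing lines is exactly right. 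For $d=3$, your extremal-ray analysis works because contracted rays $R$ satisfy $(K+2H)\cdot R=0$ and $H\cdot R>0$, hence are $K$-negative, Mori's theorem excludes small contractions on smooth threefolds, and among the divisorial types only the blow-down of $\PP^2$ with normal bundle $\O_{\PP^2}(-1)$ has $K\cdot\ell$ even ($K\cdot\ell=-2H\cdot\ell$ rules out the types with $K\cdot\ell=-1$). Two steps remain glossed, and you correctly flag both as imports from the literature: the upgrade from the general fibre to \emph{every} closed fibre in cases (II) and (III) is itself a nontrivial theorem, and the very ampleness of $K'+2H'$ on the reduction requires Sommese--Van de Ven/Reider-type results. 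What your approach buys is a self-contained modern skeleton of the argument (historically Sommese's original proofs predate Mori theory and used different techniques); what the paper's approach buys is brevity, which is appropriate since the Fact is pure background there.
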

Thus, except for a small list of well understood cases, $K+2H$ is nef and big, 
and $(X,H)$ admits a reduction $(X',H')$ with $K'+2H'$ ample. 
When moreover $K'+H'$ is nef and big, 
one says that $(X',H')$ is of \emph{log-general type}.
In this case, a smooth member of $|H'|$ is a minimal surface of general type. 
We have the following:
\begin{fact}\label{NefBigFact}
 Let $(X',H')$ be a reduction of $(X,H)$. Then $K'+H'$ is nef and big unless either:
 \begin{enumerate}[(I)]
  \item\label{P3red} $(X',H')\simeq(\PP^3,\O_{\PP^3}(3))$;
  \item\label{Q3red} $(X',H')\simeq(Q,\O_Q(2))$, where $Q$ is a quadric in $\PP^4$;
  \item\label{VeroneseFibration} $(X',H')$ is a Veronese fibration over a smooth curve $Y$,
  hence there is a surjective  morphism $\pi:X'\rightarrow Y$ 
  whose general fibre is $(\PP^2,\O_{\PP^2}(2))$ and 
  $2K'+3H'\approx\pi^{\ast}\mathcal{L}$ for some ample line bundle $\mathcal{L}$ on $Y$;
  \item\label{Mukaired} $(X',H')$ is a Mukai variety, i.e. a Fano variety of coindex $3$;
  \item\label{delPezzoFibration} $(X',H')$ is a del Pezzo fibration over a smooth curve $Y$,
  hence there exists a surjective morphism $\pi:X'\rightarrow Y$  and 
  $K'+H'\approx\pi^{\ast}\mathcal{L}$ for some ample line bundle $\mathcal{L}$ on $Y$;
  \item\label{quadricFibrOverSurface} $(X',H')$ is a conic bundle over a surface $Y$, 
  hence there exists a surjective morphism $\pi:X'\rightarrow Y$ 
  and $K'+H'\approx\pi^{\ast}\mathcal{L}$ for some ample line bundle $\mathcal{L}$ on $Y$.
 \end{enumerate}
\end{fact}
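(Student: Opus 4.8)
The plan is to analyze the \emph{nefvalue}
\[
\tau \;=\; \tau(X',H') \;:=\; \min\{\,t\in\QQ : K'+tH' \text{ is nef}\,\}
\]
of the reduction and to read off the geometry of $(X',H')$ from the associated nefvalue morphism. Since $K+2H\approx\pi^{\ast}(K'+2H')$ and $K+2H$ is nef and big, the divisor $K'+2H'$ is nef and big as well; as all the enumerated cases have $K'+2H'$ ample, I treat the minimal reduction, for which $K'+2H'$ is ample. By the cone theorem the nefvalue is attained on an extremal rational curve $C$ with $(K'+\tau H')\cdot C=0$; were $\tau=2$, such a $C$ would give $(K'+2H')\cdot C=0$, contradicting ampleness, so $\tau<2$. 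If moreover $\tau<1$, then $K'+H'=(K'+\tau H')+(1-\tau)H'$ is the sum of a nef and an ample divisor, hence ample, and in particular nef and big. Thus the whole content is to treat the range $1\le\tau<2$ and show that there $K'+H'$ is nef and big \emph{unless} $(X',H')$ appears in the list.

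Next I would produce the nefvalue morphism. By the Kawamata--Shokurov base-point-free theorem $K'+\tau H'$ is semiample, so for $m\gg0$ divisible the system $|m(K'+\tau H')|$ defines a morphism $\phi:X'\to W$ with connected fibres onto a normal projective variety $W=\phi(X')$; by Kawamata's rationality theorem $\tau$ is rational. The morphism $\phi$ contracts exactly the curves on which $K'+\tau H'$ vanishes, and on a general (smooth) fibre $F$ one has $(K'+\tau H')|_F=\O_F$ and $K_F=K'|_F$, whence $-K_F=\tau\,(H'|_F)$, exhibiting $(F,H'|_F)$ as a polarized Fano variety with nefvalue $\tau$. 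Because $K'+2H'$ is ample, the restriction $(K'+2H')|_F=(2-\tau)(H'|_F)$ shows $H'|_F$ is ample, so each fibre is a genuine Fano polarized pair; this is the lever used to identify the fibres below.

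Consider first $\tau=1$. Then $K'+H'$ is nef, and it is big precisely when $\dim W=3$, i.e. $\phi$ is birational --- the generic, non-exceptional situation. If $\dim W\le 2$ then $(K'+H')^3=0$, so $K'+H'$ is nef but not big, and I read off the type from $\dim W$: for $\dim W=0$ one gets $-K'=H'$, a Mukai variety, case \ref{Mukaired}; for $\dim W=1$ the general fibre is a del Pezzo surface anticanonically polarized and $K'+H'=\phi^{\ast}\mathcal{L}$, a del Pezzo fibration, case \ref{delPezzoFibration}; for $\dim W=2$ the general fibre is a conic ($F\cong\PP^1$ with $H'|_F=\O(2)$) and $K'+H'=\phi^{\ast}\mathcal{L}$, a conic bundle, case \ref{quadricFibrOverSurface}. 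Now consider $1<\tau<2$, where $K'+H'$ fails to be nef; I must land in \ref{P3red}, \ref{Q3red}, or \ref{VeroneseFibration}. Here $\phi$ is a contraction of fibre type, so $\dim W<3$, and I analyze by $\dim W$ using that $(F,H'|_F)$ is Fano with $-K_F=\tau H'|_F$ and $H'|_F$ an \emph{integral} ample class: for $\dim W=0$, $X'$ is a Fano threefold and the integrality constraint on $\tau H'=-K'$ forces $(\PP^3,\O_{\PP^3}(3))$ with $\tau=4/3$ (case \ref{P3red}) or $(Q,\O_Q(2))$ with $\tau=3/2$ (case \ref{Q3red}); for $\dim W=1$ the general fibre is a del Pezzo surface, and the only integral solution of $-K_F=\tau H'|_F$ with $1<\tau<2$ is $(\PP^2,\O_{\PP^2}(2))$ at $\tau=3/2$, a Veronese fibration over a curve (case \ref{VeroneseFibration}); and $\dim W=2$ is excluded, since a conic $\PP^1$ would need $H'|_F=\O(2/\tau)$, which is not integral for $1<\tau<2$.

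The hard part will be the second range $1<\tau<2$: one must verify that the nefvalue morphism is genuinely of fibre type there --- never small, and not a divisorial blow-down to a point --- and this is exactly where the minimality of the reduction (ampleness of $K'+2H'$, so that the adjunction-exceptional divisors already contracted in Fact~\ref{listaCasi} are gone) does the real work through the classification of three-dimensional extremal contractions. The remaining labour is bookkeeping: Kawamata rationality bounds $\tau$ to finitely many rationals in $(1,2)$, and the integrality of $H'|_F$ matches each admissible value to a unique low-dimensional Fano fibre, as recorded above.
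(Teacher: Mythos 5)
The paper itself contains no proof of this Fact: it is quoted as known background from the adjunction-theory literature (the sources listed at the start of Subsection~\ref{adjunctionThery}, notably \cite{fujita-polarizedvarieties,sommese-adjunction-theoretic,beltrametti-sommese,beltrametti-biancofiore-sommese-loggeneral}), so your proposal can only be measured against the standard proof found there --- and it reconstructs essentially that proof. The architecture is correct: Fujita's nefvalue $\tau$, Kawamata rationality and base-point freeness producing the nefvalue morphism $\phi:X'\to W$, ampleness of $K'+2H'$ forcing $\tau<2$, the trichotomy $\tau<1$ (ample, done), $\tau=1$ (nef, big iff $\dim W=3$; otherwise Mukai, del Pezzo fibration, or conic bundle according to $\dim W=0,1,2$), and $1<\tau<2$, where integrality of $H'|_F$ against $-K_F=\tau H'|_F$ pins down $(\PP^3,\O_{\PP^3}(3))$ at $\tau=4/3$, $(Q,\O_Q(2))$ and the Veronese fibration at $\tau=3/2$, and excludes $\dim W=2$ since $\deg H'|_F=2/\tau\notin\ZZ$. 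All of these computations check out, including the Fano-index bookkeeping via Kobayashi--Ochiai when $\dim W=0$ and $\rho(X')$ is arbitrary.

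The step you flag as ``the hard part'' is indeed the only real gap in your write-up, but it closes with results you may simply quote. On a \emph{smooth} projective threefold, Mori's theorem (1982) says every extremal contraction is divisorial or of fibre type --- small contractions do not occur --- so ``never small'' is automatic. Running through Mori's divisorial types, a birational extremal ray spanned by $C$ has nefvalue contribution $\tau=-K'\cdot C/H'\cdot C$ equal to $2/(H'\cdot l)$ for $E\simeq\PP^2$ with normal bundle $\O(-1)$, and $\leq 1$ in all other cases ($E\simeq\PP^2$ with $\O(-2)$, quadric with $\O(-1)$, blow-up of a curve, each giving $K'\cdot C=-1$); so the only birational ray with $\tau$-value in $[1,2]$ other than $\tau=1$ is the $(-1)$-plane with $H'\cdot l=1$ at $\tau=2$, which is precisely an adjunction-exceptional divisor and is killed by ampleness of $K'+2H'$ on the minimal reduction. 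Hence for $1<\tau<2$ the face supported by $K'+\tau H'$ contains only fibre-type rays and $\dim W<3$, exactly as your argument needs. One further point: your restriction to the minimal reduction is not merely convenient but forced, since the Fact as literally stated fails for a non-minimal reduction: if $\sigma:X'\to X''$ contracts an exceptional plane $E$ with line $l$, then $K'+H'=\sigma^{\ast}(K''+H'')+E$ meets $l$ in $E\cdot l=-1$, so $K'+H'$ is never nef there, while such a blow-up need not appear on the list; the statement is meaningful only for the reduction with $K'+2H'$ ample, which is also the only way the paper uses it (Proposition~\ref{casiPossi} applies it to $(\R,H_{\R})$). With the citation of Mori's classification plus that divisorial bookkeeping made explicit (and a line on descending $K'+H'$, resp.\ $2K'+3H'$, to an ample $\mathcal{L}$ on $Y$ via semiampleness), your outline is a complete and correct proof, in agreement with the literature proof the paper points to.
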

The \emph{pluridegrees} of $(X',H')$, for $j=0,\ldots,3$, where
$(X',H')$ is the reduction of $(X,H)$,
are  defined as
$d_j=d_j(X',H')={(K'+H')}^j\,{H'}^{3-j}$. 
The following result 
collects 
several inequalities, 
proved in \cite{beltrametti-biancofiore-sommese-loggeneral},
for the pluridegrees in the case when $(X',H')$ is of log-general type.
\begin{fact}\label{LogGeneralIneq}
Assume that $K+2H$ is nef and big, 
and let $(X',H')$ be the reduction of $(X,H)$.
If $K'+H'$ is nef and big, then
 the following inequalities hold:
 \begin{enumerate}[(i)]
  \item\label{trivialIneq} $d_1\geq1$,  $d_2\geq3$,  $d_3\geq1$;
  \item\label{hodgeIneq} $d_1^2\geq d_{2} d_{0}$, $d_2^2\geq d_{3} d_{1}$;
  \item\label{hodge2Ineq} $d_1^3\geq d_{3} d_{0}^2$, $d_2^3\geq d_{3}^2 d_{0}$;
  \item\label{lemma1p3} $5d_1\geq d_0$, $5d_2\geq d_1$, $5d_3\geq d_2$; 
  \item\label{prop2p8}  if it is not true that $d_3=1$, $d_2=5$, and $d_1\leq25$,
                        then  $4d_1\geq d_0$, $4d_2\geq d_1$, and $4d_3\geq d_2$; 
  \item\label{MiyaokaYauNoether} $2\left(\chi(\O_{X'}) - \chi(\O_{X'}(-H'))\right)-6\leq d_2<9\left(\chi(\O_{X'}) - \chi(\O_{X'}(-H'))\right)$;
  \item\label{prop2p11p1} $\left(3d_2+2d_1-d_0+12(\chi(\O_{X'}) - \chi(\O_{X'}(-H')))\right)/32\geq\chi(\O_{X'})$; 
  \item\label{prop2p11p2} $2d_3+7d_2+12d_1-3d_0+30\chi(\O_{X'}) + 18\chi(\O_{X'}(-H'))\geq0$;
  \item\label{prop2p11p3} $24\left(\chi(\O_{X'}) - \chi(\O_{X'}(-H'))\right)+2g-2-2d_2-c_3(\T_{X'})\geq 0$.
 \end{enumerate}
\end{fact}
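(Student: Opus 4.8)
The plan is to reduce everything to positivity properties of the two nef divisors $H'$ and $K'+H'$ on the threefold $X'$, together with the geometry of a general surface section. Since $(X',H')$ is of log-general type, $H'$ is ample and $K'+H'$ is nef and big, so all the pluridegrees $d_j=(K'+H')^j\,{H'}^{3-j}$ are nonnegative. The crucial observation I would exploit is that a general member $S\in|H'|$ is a minimal surface of general type with $K_S=(K'+H')|_S$ and polarization $A=H'|_S$; restricting to $S$ gives
\begin{equation*}
d_0=A^2,\qquad d_1=A\cdot K_S,\qquad d_2=K_S^2,
\end{equation*}
while $d_3=(K'+H')^3$ sees only the threefold. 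Moreover the exact sequence $0\to\O_{X'}(-H')\to\O_{X'}\to\O_S\to0$ identifies $\chi(\O_{X'})-\chi(\O_{X'}(-H'))$ with $\chi(\O_S)$, which is exactly what lets one translate the statements involving this quantity into invariants of $S$.

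With this dictionary in place, I would treat the inequalities in groups. The positivity bounds \eqref{trivialIneq} come from $H'$ ample and $K'+H'$ nef and big (so ${(K'+H')}^3>0$) together with the classical lower bounds for minimal surfaces of general type. The Hodge-type inequalities \eqref{hodgeIneq} are precisely the Khovanskii--Teissier log-concavity of the sequence $d_0,d_1,d_2,d_3$ attached to the nef pair $(H',K'+H')$; equivalently, $d_1^2\geq d_0 d_2$ is the Hodge index theorem applied to $A$ and $K_S$ on $S$. The higher inequalities \eqref{hodge2Ineq} then follow formally by combining these: from $d_1^4\geq d_0^2 d_2^2\geq d_0^2\, d_1 d_3$ one gets $d_1^3\geq d_0^2 d_3$, and symmetrically $d_2^4\geq d_1^2 d_3^2\geq d_0 d_2\, d_3^2$ yields $d_2^3\geq d_0 d_3^2$.

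The surface inequality \eqref{MiyaokaYauNoether} is where classical surface theory enters: rewritten through the dictionary it reads $2\chi(\O_S)-6\leq K_S^2<9\chi(\O_S)$, whose lower bound follows from Noether's inequality $K_S^2\geq 2p_g-4$ together with $q\geq0$, and whose upper bound is the Bogomolov--Miyaoka--Yau inequality. The remaining relations \eqref{prop2p11p1}--\eqref{prop2p11p3} I would attack by writing out Riemann--Roch on $X'$ and on $S$ to express $\chi(\O_{X'})$, $\chi(\O_{X'}(-H'))$ and the sectional genus $g$ in terms of the $d_j$ and the Chern numbers of $X'$, and then feeding in Miyaoka-type Chern-class inequalities — in particular one controlling $c_3(\T_{X'})$, the topological Euler characteristic — to clear the error terms.

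The genuinely hard part is not the Hodge-theoretic block but the sharp linear bounds \eqref{lemma1p3} and \eqref{prop2p8} (the factors $5$ and $4$) and the delicate final inequalities \eqref{prop2p11p1}--\eqref{prop2p11p3}. These cannot be obtained from positivity alone: they require the full second-reduction machinery of adjunction theory and a careful case analysis of the borderline polarized threefolds, which is precisely why \eqref{prop2p8} carries its exceptional clause. My plan would be to isolate those extremal configurations, bound each $d_j$ against its neighbour in the chain, and invoke the structural classification to sharpen or rule out the remaining cases; this is the step I expect to carry essentially all of the technical weight, and for which I would lean on the detailed analysis of Beltrametti, Biancofiore and Sommese.
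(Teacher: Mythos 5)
The first thing to say is that the paper offers no proof of Fact~\ref{LogGeneralIneq} at all: it is stated as a ``Fact'' precisely because it is imported wholesale from \cite{beltrametti-biancofiore-sommese-loggeneral} (the source labels of items (iv), (v), (vii)--(ix) even mirror the numbering Lemma~1.3, Proposition~2.8, Proposition~2.11 of that paper). So your sketch is really being compared with the proofs in that reference, and measured that way your division of labour is essentially the right one. The dictionary $d_0=A^2$, $d_1=K_S\cdot A$, $d_2=K_S^2$, $\chi(\O_{X'})-\chi(\O_{X'}(-H'))=\chi(\O_S)$ for a general $S\in|H'|$ (with $K_S=(K'+H')|_S$ nef, so $S$ minimal of general type) is exactly how these inequalities are read off; item (ii) is the Khovanskii--Teissier/Hodge-index log-concavity of $d_0,d_1,d_2,d_3$ for the nef pair $(H',K'+H')$; your formal deduction of (iii) from (ii), dividing by $d_1>0$ and $d_2>0$, is correct and is how those inequalities arise; the lower bound in (vi) is Noether's inequality plus $p_g\geq\chi-1$; and items (iv), (v), (vii)--(ix) genuinely do not follow from positivity, so deferring them to the adjunction-theoretic case analysis of Beltrametti--Biancofiore--Sommese is not a cop-out --- it is literally what the paper does for the entire statement.

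Two concrete soft spots, both minor but real. First, for item (i) you invoke ``classical lower bounds for minimal surfaces of general type,'' but the classical bound is only $K_S^2\geq 1$, i.e. $d_2\geq 1$; the stated $d_2\geq 3$ is not classical and requires the extra structure coming from the polarization (it is part of what is actually proved in \cite{beltrametti-biancofiore-sommese-loggeneral}). Second, Bogomolov--Miyaoka--Yau gives $K_S^2\leq 9\chi(\O_S)$, a weak inequality, whereas (vi) asserts the strict one; excluding the equality case (ball quotients, which are rigid and contain no rational curves) for a surface that moves in an ample linear system on a threefold needs a further argument and is again supplied by the reference, not by BMY itself. Neither gap invalidates your plan, but as written those two steps would not deliver the constants claimed in the statement.
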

\subsection{Projective degrees of Cremona transformations} 
Recall that the $k$-th projective degree $\deg_k(\varphi)$ 
of a Cremona transformation $\varphi:\PP^n\dashrightarrow\PP^n$ 
is the degree of 
the inverse image $\overline{\varphi^{-1}(\PP^{n-k})}$  of a general linear subspace 
$\PP^{n-k}\subseteq\PP^n$;  
in other words, the projective degrees of $\varphi$ are the multidegree of its graph,
considered as a cycle on $\PP^n\times\PP^n$. 
The following general 
result puts 
restrictions on the possible projective degrees of  Cremona transformations. 
The first part is easy and due to L. Cremona; 
the second part is an application of Hodge type inequalities
contained in \cite[Corollary~1.6.3]{lazarfeld-positivity}; see also 
\cite[Subsection~1.4]{dolgachev-cremonas}.  
\begin{fact}\label{factCremona}
 Let $\varphi:\PP^n\dashrightarrow\PP^n$ be a Cremona transformation. 
 The following hold:
 \begin{itemize}
  \item $1\leq\deg_{i+j}(\varphi)\leq  \deg_{i}(\varphi)\,\deg_{j}(\varphi)$, for $0\leq i,j\leq n$, with $i+j\leq n$;
  \item $\deg_{i-1}(\varphi)\,\deg_{i+1}(\varphi)\leq {\deg_i(\varphi)}^2$, for $1\leq i\leq n-1$. 
 \end{itemize}
\end{fact}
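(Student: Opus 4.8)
The plan is to realize all the projective degrees as mixed intersection numbers on the graph, and then to treat the two groups of inequalities separately: the first by an elementary B\'ezout argument (Cremona's), the second by the Khovanskii--Teissier inequalities for nef classes.

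First I would set up notation. Let $\Gamma\subseteq\PP^n\times\PP^n$ be the closure of the graph of $\varphi$, an irreducible $n$-dimensional variety, with projections $p_1,p_2$, and put $h_1=p_1^{\ast}\O_{\PP^n}(1)$, $h_2=p_2^{\ast}\O_{\PP^n}(1)$. In the Chow ring of $\PP^n\times\PP^n$ one has $[\Gamma]=\sum_{k=0}^n\deg_k(\varphi)\,h_1^k h_2^{n-k}$, so that
\[
\deg_k(\varphi)=\int_\Gamma h_1^{\,n-k}\,h_2^{\,k},
\]
because computing the degree of $\overline{\varphi^{-1}(\PP^{n-k})}$ by intersecting with a general $\PP^k$ of the source amounts to cutting $\Gamma$ with $h_1^{n-k}$ (from the first factor) and $h_2^{k}$ (from the second). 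Since $h_1$ and $h_2$ are pullbacks of a very ample class under morphisms, both are nef on $\Gamma$. The lower bound $\deg_k(\varphi)\ge1$ is then immediate: $\overline{\varphi^{-1}(\PP^{n-k})}$ is a nonempty projective scheme of dimension $n-k$, so its degree is a positive integer (and in particular $\deg_0(\varphi)=1$).

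For the submultiplicativity $\deg_{i+j}(\varphi)\le\deg_i(\varphi)\deg_j(\varphi)$ I would follow Cremona's elementary idea. Fix general linear subspaces $\Lambda\subseteq\PP^n$ of codimension $i$ and $M\subseteq\PP^n$ of codimension $j$, so that $\Lambda\cap M$ is a general linear subspace of codimension $i+j$. Set $V=\overline{\varphi^{-1}(\Lambda)}$ and $W=\overline{\varphi^{-1}(M)}$, of degrees $\deg_i(\varphi)$ and $\deg_j(\varphi)$. Away from the base locus $\B$ one has $\varphi^{-1}(\Lambda\cap M)=\varphi^{-1}(\Lambda)\cap\varphi^{-1}(M)$, so $\overline{\varphi^{-1}(\Lambda\cap M)}$ is supported on irreducible components of $V\cap W$ of the expected dimension $n-i-j$, and the sum of their degrees is $\deg_{i+j}(\varphi)$. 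B\'ezout's theorem in its inequality form bounds the sum of the degrees of all irreducible components of $V\cap W$ by $\deg(V)\deg(W)$; since the components of $\overline{\varphi^{-1}(\Lambda\cap M)}$ are among them, this gives $\deg_{i+j}(\varphi)\le\deg_i(\varphi)\deg_j(\varphi)$. The cases $i=0$ or $j=0$ are trivial, as $\deg_0(\varphi)=1$.

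For the second group of inequalities $\deg_{i-1}(\varphi)\deg_{i+1}(\varphi)\le\deg_i(\varphi)^2$ I would invoke the Hodge-type (Khovanskii--Teissier) inequalities. Passing to a resolution $\mu:\widetilde\Gamma\to\Gamma$ and pulling back, the classes $\widetilde h_1=\mu^{\ast}h_1$ and $\widetilde h_2=\mu^{\ast}h_2$ remain nef, and by the projection formula $\int_{\widetilde\Gamma}\widetilde h_1^{\,n-k}\widetilde h_2^{\,k}=\deg_k(\varphi)$. On the smooth projective variety $\widetilde\Gamma$, Corollary~1.6.3 of \cite{lazarfeld-positivity} asserts that for nef classes the sequence $k\mapsto\widetilde h_1^{\,n-k}\widetilde h_2^{\,k}$ is log-concave, which is precisely $\deg_{i-1}(\varphi)\deg_{i+1}(\varphi)\le\deg_i(\varphi)^2$; see also \cite{dolgachev-cremonas}. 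The only delicate points are intersection-theoretic bookkeeping rather than new ideas: in the B\'ezout step one must use genericity of $\Lambda,M$ to guarantee that $V\cap W$ has the expected dimension along $\overline{\varphi^{-1}(\Lambda\cap M)}$ and apply B\'ezout in its inequality form, which remains valid despite the excess intersection along $\B$; in the Hodge step one must pass to a resolution so as to apply Khovanskii--Teissier on a smooth variety. The genuine mathematical content is the Khovanskii--Teissier inequality itself, which we are citing.
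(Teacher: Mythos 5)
Your proposal is correct and takes essentially the same route as the paper, which gives no independent argument but attributes the first bullet to an easy B\'ezout-type argument of Cremona and the second to the Hodge-type (Khovanskii--Teissier) inequalities of \cite[Corollary~1.6.3]{lazarfeld-positivity} applied to the graph, as in \cite[Subsection~1.4]{dolgachev-cremonas}. You merely flesh out those citations (multidegrees as nef intersection numbers on $\Gamma$, refined B\'ezout for submultiplicativity despite the excess intersection along $\B$, and a resolution plus Khovanskii--Teissier for log-concavity), all of which is sound.
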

The next result concerns the computation of the projective degrees.
The only non-trivial part is the second equality of (\ref{segresDegs}), 
and this is essentially shown in \cite[p.~291]{crauder-katz-1989}.
It is
 a special case of 
the well-known relationship 
between the projective degrees of a rational map  from $\PP^n$ 
and the  
Segre class of the base locus in $\PP^n$ 
(see \cite[Proposition~4.4]{fulton-intersection}  
and \cite[Subsection~2.3]{dolgachev-cremonas}). 
\begin{fact}\label{factSpecialCremona2}
Let $\varphi:\PP^n\dashrightarrow\PP^n$ be a special Cremona transformation 
    of type $(\delta_1,\delta_2)$. Let $\B$ be its base locus, $r=\dim (\B)$,  and 
    let $s_i(\B)=s_i(\N_{\B,\PP^n})\,  H_{\B}^{r-i}$ be the degree of 
  the $i$-th Segre class of the normal bundle of $\B$. 
  Then the following hold for $k=0,\ldots,n$:  
\begin{gather}
\deg_{0}(\varphi)=\deg_{0}(\varphi^{-1})=1,\quad \deg_{1}(\varphi)=\delta_1 ,\quad  \deg_{1}(\varphi^{-1})=\delta_2; \\
\begin{split}
\label{segresDegs}
     \deg_{k}(\varphi^{-1}) & =\deg_{n-k}(\varphi) \\ &= \delta_1^{n-k}-  \binom{n-k}{r-k}\,\delta_1^{r-k}\,\deg(\B)  -\sum_{i=k}^{r-1}\binom{n-k}{i-k}\,\delta_1^{i-k}\,s_{r-i}(\B)  .
\end{split}
     \end{gather}
\end{fact}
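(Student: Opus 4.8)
The statement splits into three parts of very different depth, so the plan is to dispose of the routine ones first and then concentrate on the single genuine computation. The first line of the display is immediate: $\deg_0(\varphi)=\deg_0(\varphi^{-1})=1$ because the preimage of all of $\PP^n$ is again $\PP^n$ (degree $1$), and $\deg_1(\varphi)=\delta_1$, $\deg_1(\varphi^{-1})=\delta_2$ hold by the very definition of the type $(\delta_1,\delta_2)$. The first equality in \eqref{segresDegs}, namely $\deg_k(\varphi^{-1})=\deg_{n-k}(\varphi)$, is the classical symmetry of the multidegree: the graph $\Gamma\subset\PP^n\times\PP^n$ of $\varphi^{-1}$ is obtained from that of $\varphi$ by interchanging the two factors, and the projective degrees are exactly the coefficients $[\Gamma]\cdot h_1^{a}\,h_2^{n-a}$ of its class in $\PP^n\times\PP^n$, which get reversed under the swap. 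Hence the whole content is the \emph{second} equality in \eqref{segresDegs}.

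To prove it I would resolve $\varphi$ geometrically. Put $c=\codim(\B)=n-r$ and let $\pi:Z\to\PP^n$ be the blow-up of $\PP^n$ along the smooth center $\B$, with exceptional divisor $E$; write $h=\pi^{\ast}\O_{\PP^n}(1)$ and $e=[E]$. Smoothness of $\B$ (guaranteed by speciality) makes $Z$ smooth and makes the strict transform of the defining linear system base-point free, so that $\varphi$ lifts to a morphism $\widetilde{\varphi}:Z\to\PP^n$ with $\widetilde{\varphi}^{\ast}\O_{\PP^n}(1)\approx\delta_1 h-e$. Since $\overline{\varphi^{-1}(\PP^{k})}$ is cut out by $n-k$ general members of the system, its degree is the intersection number
\[
\deg_{n-k}(\varphi)=(\delta_1 h-e)^{n-k}\cdot h^{k}
=\sum_{j=0}^{n-k}\binom{n-k}{j}(-1)^{j}\,\delta_1^{\,n-k-j}\,\bigl(h^{\,n-j}\cdot e^{\,j}\bigr),
\]
so everything reduces to the mixed numbers $h^{\,n-j}\cdot e^{\,j}$.

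These are supplied by the standard blow-up calculus. Writing $E=\PP(\N_{\B,\PP^n})$ with projection $g:E\to\B$ and tautological class $\zeta=c_1(\O_E(1))$, one has $\O_E(E)\cong\O_E(-1)$ and, by definition of the Segre classes, $g_{\ast}(\zeta^{\,c-1+i})=s_i(\N_{\B,\PP^n})$. This gives $\pi_{\ast}(e^{\,j})=0$ for $1\le j<c$ and $\pi_{\ast}(e^{\,j})=(-1)^{j-1}\,\iota_{\ast}\,s_{j-c}(\N_{\B,\PP^n})$ for $j\ge c$, where $\iota:\B\hookrightarrow\PP^n$, while $h^{n}=1$. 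By the projection formula $h^{\,n-j}\cdot e^{\,j}=(-1)^{j-1}\,s_{j-c}(\B)$ for $j\ge c$, in the notation $s_i(\B)=s_i(\N_{\B,\PP^n})\,H_{\B}^{r-i}$ of the statement. Substituting, the $j=0$ term contributes $\delta_1^{\,n-k}$, the terms $1\le j<c$ vanish, and for $j\ge c$ the two signs cancel to yield $-\binom{n-k}{j}\delta_1^{\,n-k-j}s_{j-c}(\B)$. Reindexing by $i=j-c$, using $c=n-r$ and the identity $\binom{n-k}{i+c}=\binom{n-k}{r-k-i}$, turns this into exactly the right-hand side of \eqref{segresDegs}, with the $i=0$ term (where $s_0=1$ and $H_{\B}^{r}=\deg\B$) displayed separately as the $\binom{n-k}{r-k}\delta_1^{r-k}\deg(\B)$ summand; the usual vanishing convention for binomials with negative lower index absorbs the edge cases $k\ge r$.

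The only delicate points, and where I expect the main obstacle, are the sign and binomial bookkeeping in the reindexing together with the justification of the pushforward identity for $\pi_{\ast}(e^{\,j})$, i.e.\ correctly handling $E=\PP(\N_{\B,\PP^n})$ and the convention $\O_E(E)\cong\O_E(-1)$. A secondary subtlety is the base-point freeness of the strict transform, which rests on the base scheme of a special transformation being precisely the reduced smooth $\B$, along which the members vanish to order one. If one prefers to sidestep this, the same numbers $h^{\,n-j}\cdot e^{\,j}$ are furnished directly by Fulton's Segre-class formula for rational maps \cite[Proposition~4.4]{fulton-intersection}, which requires no literal morphism and is the robust route; the blow-up picture is simply the most transparent way to see where the Segre classes enter, matching the derivation sketched in \cite[p.~291]{crauder-katz-1989}.
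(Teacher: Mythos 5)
Your proof is correct, and it is essentially the argument the paper itself points to: the paper gives no proof of this Fact but cites the blow-up computation of Crauder--Katz \cite[p.~291]{crauder-katz-1989} and Fulton's Segre-class formula \cite[Proposition~4.4]{fulton-intersection}, which is exactly your resolution $\widetilde{\varphi}:Z\to\PP^n$ with $\widetilde{\varphi}^{\ast}\O_{\PP^n}(1)\approx\delta_1 h-e$ and the pushforward $\pi_{\ast}(e^{j})=(-1)^{j-1}\iota_{\ast}s_{j-c}(\N_{\B,\PP^n})$. Your sign and reindexing bookkeeping checks out (the $i'=j-c=0$ term gives the $\binom{n-k}{r-k}\delta_1^{r-k}\deg(\B)$ summand, and the vanishing-binomial convention handles $k\geq r$), so nothing is missing.
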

\subsection{Some general constraints on special Cremona transformations}
Let $\varphi$ be a special Cremona transformation of type 
$(\delta_1,\delta_2)$ of $\PP^n$, and let $\B$ (resp. $\B'$) denote 
the base locus of $\varphi$ (resp. of $\varphi^{-1}$).
In \cite[Lemma~2.4]{ein-shepherdbarron} (see also \cite{crauder-katz-1989}) 
the following formulas are obtained:
\begin{equation}\label{dimensionBaseLocus}
n
=\frac{(\delta_1\delta_2-1)\,\dim\B+(\delta_1+1)\delta_2-2}{(\delta_1-1)\delta_2}
= \frac{(\delta_1\delta_2-1)\,\dim\B'+(\delta_2+1)\delta_1-2}{(\delta_2-1)\delta_1}  ,
\end{equation}
and one has $2\leq \delta_1,\delta_2\leq n$ and $1\leq\dim\B,\dim\B'\leq n-2$.
When $n\geq6$, one has $\delta_1\leq n-1$ and $\dim\B\leq n-3$, 
see \cite[Theorem~3.2]{ein-shepherdbarron}.

Let $k_0=k_0(\varphi)=(n-\dim\B)\delta_1-(n+1)$.
A simple application of the Kawamata-Viehweg vanishing theorem applied on 
 the blowing-up of $\PP^n$ along the base locus of $\varphi$ gives 
 the following
 (see e.g.  \cite[Lemma~4.3]{note}, \cite[Proposition~4.2(ii)]{russo-qel1}, or \cite[Corollary~2.12]{vermeire}):
 \begin{equation}\label{vanishingLemma}
  h^i(\PP^n,\I_{\B,\PP^n}(k))=0,\quad \mbox{for } i\geq1 \mbox{ provided } k\geq k_0 .
 \end{equation}
As one has $
            h^0(\PP^n,\I_{\B,\PP^n}(\delta_1 -1))=0$ 
and $h^0(\PP^n,\I_{\B,\PP^n}(\delta_1))= n+1 $, 
by \eqref{vanishingLemma} one always gets at least two conditions for the Hilbert polynomial 
of $\B$.
Note also that for $k\geq k_0+1$, \eqref{vanishingLemma}
is a consequence of the main result in \cite{bertram-ein-lazarsfeld}. 
\subsection{Four-secants of surfaces in \texorpdfstring{$\PP^5$}{P5}}
Recall the formula, due to P. Le Barz, calculating the number  of $4$-secant lines 
of a surface $S\subset\PP^5$.
\begin{fact}[\cite{barz-quadrisecantes}, see also \cite{barz-multisecantes}]\label{multisecants}
Let $S\subset\PP^5$ be a smooth surface that contains at most 
finitely many lines
and put
 $\kappa=K_{S}\,H_{S}$, $\zeta=K_{S}^2$, $\theta=c_2(\T_{S})$, and $\lambda=H_{S}^2$. Then 
the number of $4$-secant lines of $S$, if finite, is given by 
\begin{multline*}
24^{-1}\,\left(3 {\lambda}^4-
36 {\kappa} {\lambda}^2-
6 {\zeta} {\lambda}^2+
6 {\theta} {\lambda}^2-
90 {\lambda}^3+
78 {\kappa}^2   \right. \\ \left.  +
30 {\kappa} {\zeta}+
3 {\zeta}^2 -
30 {\kappa} {\theta}-
6 {\zeta} {\theta} +
3 {\theta}^2  + 
612 {\kappa} {\lambda}+
116 {\zeta} {\lambda}-
100 {\theta} {\lambda}    \right. \\ \left. +
855 {\lambda}^2-
1980 {\kappa}-
510 {\zeta}+
294 {\theta}-
2466 {\lambda}\right)
-\sum_{\begin{subarray}{l} l\, \mathrm{line}\\l\subset S\end{subarray}}\binom{5+l^2}{4} .
\end{multline*}
\end{fact}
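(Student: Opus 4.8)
The plan is to express the number of $4$-secant lines as the degree of a determinantal (Thom--Porteous) locus on the Hilbert scheme of four points of $S$, to evaluate that degree in terms of the Chern numbers of $S$ by reducing to $S$ itself, and finally to subtract the excess coming from lines lying on $S$. First I set up the parameter space. Since $S$ is a smooth surface, by Fogarty's theorem the Hilbert scheme $S^{[4]}=\mathrm{Hilb}^4(S)$ is smooth and projective of dimension $8$; on it live the universal length-$4$ subscheme $\mathcal Z$ and the \emph{secant bundle} $\E_4=\pi_\ast(\mathrm{ev}^\ast\O_S(1))$, a rank-$4$ vector bundle whose fibre over $[Z]$ is $H^0(Z,\O_Z(1))$. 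Restriction of linear forms gives a morphism of bundles $r\colon H^0(\PP^5,\O(1))\otimes\O_{S^{[4]}}\to\E_4$, that is, a map of a trivial rank-$6$ bundle to a rank-$4$ bundle. A length-$4$ subscheme $Z\subset S$ is contained in a line if and only if the linear forms span only a pencil on $Z$, i.e. if and only if $\mathrm{rank}(r_{[Z]})\le 2$. Away from the lines on $S$, such a $Z$ is exactly $\ell\cap S$ for a $4$-secant line $\ell$, and the linear span recovers $\ell$; hence the isolated points of the degeneracy locus $D_2(r)=\{\mathrm{rank}\,r\le 2\}$ lying off the lines of $S$ are in bijection with the $4$-secant lines, and the problem reduces to computing the degree of $[D_2(r)]$ and correcting for the contribution of lines on $S$.

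By the Thom--Porteous formula this locus has the expected codimension $(6-2)(4-2)=8=\dim S^{[4]}$, consistent with the finiteness hypothesis, and its class is a universal degree-$8$ polynomial in the Chern classes of $\E_4$ (the Schur class $s_{(2,2,2,2)}(\E_4)$, since the source bundle is trivial). It therefore remains to compute $\int_{S^{[4]}}s_{(2,2,2,2)}(\E_4)$. For this I would follow Le Barz and replace $S^{[4]}$ by the smooth tower of ordered configuration spaces obtained by iterated blow-ups of $S^4$ along the diagonals, which maps finitely onto $S^{[4]}$ and on which the Chern classes of $\E_4$ are expressible through the tautological classes and the projections to $S$. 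Pushing the degree-$8$ class forward to (products of) $S$ and using $c_1(\T_S)=-K_S$, $c_2(\T_S)=\theta$, $K_SH_S=\kappa$, $K_S^2=\zeta$ and $H_S^2=\lambda$, the integral becomes the displayed weighted-homogeneous degree-$4$ polynomial in $\kappa,\zeta,\theta,\lambda$ divided by $24$; the numerical coefficients can be fixed, and the formula cross-checked, on test surfaces such as the Veronese surface, rational scrolls, and smooth complete intersections.

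The subtle point, which I expect to be the main obstacle, is the correction term. If $S$ contains a line $l$, then every length-$4$ subscheme of $l$ is aligned and contained in $S$, so $\mathrm{Hilb}^4(l)\cong\PP^4$ sits inside $D_2(r)$ as a $4$-dimensional excess component; consequently $\int_{S^{[4]}}s_{(2,2,2,2)}(\E_4)$ is only the \emph{virtual} count and overshoots the true number of isolated $4$-secants. By excess-intersection theory the spurious contribution of $l$ equals the integral over $\mathrm{Hilb}^4(l)\cong\PP^4$ of the top Segre term of the excess bundle, which is controlled by $\N_{l/S}=\O_l(l^2)$ sitting inside $\N_{l/\PP^5}=\O_l(1)^{\oplus4}$; a direct computation on $\PP^4$ identifies this number with $\binom{5+l^2}{4}$. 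Subtracting $\sum_{l\subset S}\binom{5+l^2}{4}$ then yields the stated formula. The genuine difficulty is thus threefold: making the configuration tower clean enough that the excess locus over each line $l$ is reduced and its excess normal bundle is identifiable; computing the resulting Segre integral uniformly in $l^2$ so as to obtain $\binom{5+l^2}{4}$; and checking, under the hypothesis that $S$ contains only finitely many lines, that no further excess components intervene. Once these residual contributions are pinned down, the polynomial part is a routine Chern-number evaluation.
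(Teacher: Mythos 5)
The paper offers no proof of this Fact---it is imported verbatim from Le Barz's papers---and your outline reproduces essentially the argument of that cited source: realize the count as the Porteous degeneracy class of the restriction map from $H^0(\PP^5,\O_{\PP^5}(1))$ to the rank-$4$ secant bundle on $\mathrm{Hilb}^4S$, evaluate the resulting degree-$8$ tautological integral by passing to Le Barz's blow-up tower over $S^4$ (which is what guarantees the answer is a universal polynomial in $\kappa,\zeta,\theta,\lambda$), and subtract the excess contribution $\binom{5+l^2}{4}$ of $\mathrm{Hilb}^4l\cong\PP^4$ for each line $l\subset S$. So your approach is the correct one and coincides with the proof the paper relies on by citation; the parts you defer (pinning down the $19$ coefficients and the per-line excess integral) are exactly the computational content of Le Barz's work rather than missing ideas.
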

\section{Proof of Theorem \ref{MainTheorem}}\label{section: cubic}
Let $\varphi$ be a special cubic 
 Cremona transformation  whose base locus has dimension $3$.
By \eqref{dimensionBaseLocus}
 this is the same as saying that
$\varphi$ is a special cubic Cremona transformation of $\PP^6$.
 Then $\varphi$ is of type $(3,5)$, and  
the base locus $\B\subset\PP^6$ is 
 a smooth irreducible threefold 
 cut out by $7$ cubic hypersurfaces.
\subsection{Numerical  invariants of the transformation}
In this subsection, we compute the numerical invariants of $\B$ and $\varphi$ as 
functions of $\lambda$ and $g$, where $\lambda$ and $g$ denote, respectively, the degree and 
the sectional genus of $\B$.
\begin{lemma}\label{lemma: cohomology ideal} The following hold:
 \begin{itemize} 
  \item $h^0(\PP^6,\I_{\B,\PP^6}(1))=h^0(\PP^6,\I_{\B,\PP^6}(2))= 0$ and $h^0(\PP^6,\I_{\B,\PP^6}(3))=7$;
  \item $h^i(\PP^6,\I_{\B,\PP^6}(j))=0$, for every $i\geq 1$ and $j\geq2$;
  \item $h^1(\PP^6,\I_{\B,\PP^6}(1))=0$, i.e. $\B$ is linearly normal.
 \end{itemize}
\end{lemma}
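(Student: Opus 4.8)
The plan is to obtain the three groups of statements in increasing order of depth. The two lines of $h^0$'s and the vanishing for $j\geq2$ are essentially formal consequences of the general facts recalled for special Cremona transformations, whereas the linear normality assertion $h^1(\PP^6,\I_{\B,\PP^6}(1))=0$ is the only point requiring a genuine argument, and is where I expect the main obstacle to lie.

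First, since $\varphi$ is of type $(3,5)$ on $\PP^6$ its defining linear system is exactly $H^0(\PP^6,\I_{\B,\PP^6}(3))$, so the general identities $h^0(\I_{\B}(\delta_1-1))=0$ and $h^0(\I_{\B}(\delta_1))=n+1$ (with $\delta_1=3$, $n=6$) give at once $h^0(\I_{\B}(2))=0$ and $h^0(\I_{\B}(3))=7$. Multiplication by a fixed nonzero linear form injects $H^0(\I_{\B}(1))$ into $H^0(\I_{\B}(2))=0$, so $h^0(\I_{\B}(1))=0$ and $\B$ is nondegenerate. For the higher cohomology in degrees $j\geq2$ I would simply invoke \eqref{vanishingLemma}: here $k_0=(n-\dim\B)\delta_1-(n+1)=3\cdot3-7=2$, so the Kawamata--Viehweg vanishing stated there yields $h^i(\PP^6,\I_{\B,\PP^6}(j))=0$ for all $i\geq1$ and all $j\geq2$, which is precisely the middle bullet.

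The delicate point is the remaining case $j=1$, i.e. linear normality. It falls just below the threshold $k_0=2$, so \eqref{vanishingLemma} no longer applies; and since $\B$ is a threefold with ambient dimension $N=6$ exceeding $\tfrac{3}{2}\dim\B+1=\tfrac{11}{2}$, Zak's theorem on linear normality does not cover it either, so an input from the Cremona structure is unavoidable. My approach would be to localize the difficulty on a linear section by repeated general hyperplane cuts: writing $S=\B\cap\PP^5$ and $C=\B\cap\PP^4$ for the general surface and curve sections, the degree-one restriction sequences $0\to\I_{\B}\to\I_{\B}(1)\to\I_{S,\PP^5}(1)\to0$ and $0\to\I_{S,\PP^5}\to\I_{S,\PP^5}(1)\to\I_{C,\PP^4}(1)\to0$, together with the connectedness vanishings $h^1(\PP^6,\I_{\B})=h^1(\PP^5,\I_{S,\PP^5})=0$ and the nondegeneracy just established, yield inclusions $H^1(\PP^6,\I_{\B}(1))\hookrightarrow H^1(\PP^5,\I_{S,\PP^5}(1))\hookrightarrow H^1(\PP^4,\I_{C,\PP^4}(1))$.

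It therefore suffices to prove that the general curve section $C\subset\PP^4$ is linearly normal, and this is the genuine crux of the lemma. I would not expect it to follow from a dimension count or a vanishing theorem; rather, I would establish it from the structure of $\varphi$ itself, namely the fact that the base locus of a special Cremona transformation is linearly normal (a feature tied to the completeness of the defining system and the birationality of $\varphi$), which descends to its general linear sections. Pinning down this last step cleanly, without circularly invoking the numerical invariants of $\B$ computed in the subsequent lemmas, is the part I expect to demand the most care.
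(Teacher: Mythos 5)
Your handling of the first two bullets is correct and coincides with the paper's: the $h^0$ statements are the general facts for special Cremona transformations (the paper cites \cite[Proposition~2.5]{ein-shepherdbarron} for them), and the vanishing for $j\geq2$ is exactly \eqref{vanishingLemma} with $k_0=(6-3)\cdot 3-7=2$. You also correctly located the crux in the linear normality statement and correctly observed that Zak's theorem does not apply to a threefold in $\PP^6$.

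However, your argument for the third bullet has a genuine gap: it is circular. After reducing (validly, via the restriction sequences and $h^1(\I_{\B,\PP^6})=h^1(\I_{S,\PP^5})=0$) to linear normality of the general curve section $C\subset\PP^4$, you propose to conclude by invoking ``the fact that the base locus of a special Cremona transformation is linearly normal'' --- but that is precisely the statement to be proved; it is not among the general facts recalled in the paper, and it does not follow from completeness of the defining system (knowing $h^0(\I_{\B}(3))=7$ says nothing about $h^1(\I_{\B}(1))$). Worse, the reduction moves you in the wrong direction: the inclusions give $h^1(\I_{\B}(1))\leq h^1(\I_{C,\PP^4}(1))$, so you would need linear normality of a curve in $\PP^4$ whose degree and genus are unknown at this stage --- a strictly stronger claim, which can fail even when $\B$ itself is linearly normal. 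The paper's actual argument supplies the missing input from projective geometry rather than from the Cremona structure: if $\B$ were not linearly normal, it would be an isomorphic projection of a smooth nondegenerate threefold $X\subset\PP^7$, which forces $\Sec(X)\neq\PP^7$; smooth nondegenerate threefolds in $\PP^7$ with non-filling secant variety are completely classified in \cite{fujita-3-fold}, and none of them can occur here. Some such classification theorem (playing the role that Zak's theorem would play in lower ambient dimension) is exactly what your proof is missing.
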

\begin{proof}
 The first statement is contained in \cite[Proposition~2.5]{ein-shepherdbarron},
 and the second follows by \eqref{vanishingLemma}.
  If $\B\subset\PP^6$ is not linearly normal, then 
 there exists a smooth nondegenerate threefold $X\subset\PP^7$ with $\Sec(X)\neq\PP^7$ 
and 
 such that $X$ is projected isomorphically onto $\B$.
 But such $X$'s are completely classified in \cite{fujita-3-fold}, and 
 none of them may be possible  in our case. 
\end{proof}
\begin{lemma}\label{hilbertPolB}
The Hilbert polynomial of $\B$ is 
\begin{multline*}
\chi(\O_{\B}(tH_{\B}))
= \lambda \,\binom{t+3}{3}+(-\lambda-g+1)\,\binom{t+2}{2}   \\
 +(-6\lambda+4g+45)\,(t+1)+(14\lambda-6g-113) .
\end{multline*}
\end{lemma}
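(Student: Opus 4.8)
The plan is to determine the Hilbert polynomial
$P(t)=\chi(\O_{\B}(tH_{\B}))$ of the threefold $\B\subset\PP^6$,
which is a cubic polynomial in $t$. Since $\B$ is a threefold, I can write
$P(t)=a_3\binom{t+3}{3}+a_2\binom{t+2}{2}+a_1\binom{t+1}{1}+a_0$,
and the task reduces to pinning down the four coefficients $a_3,a_2,a_1,a_0$
as functions of $\lambda$ and $g$.
The leading coefficient is immediate: $a_3=H_{\B}^3=\lambda=\deg(\B)$.
The next coefficient follows from the definition of sectional genus.
First I would recall that for the binomial basis, $a_2$ is governed by the
top two coefficients of $P$, and one has the standard formula
$2g-2=(K_{\B}+2H_{\B})H_{\B}^2$ relating $g$ to intersection numbers.
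Combining this with Riemann--Roch (or equivalently with the recursion
$\chi(\O_{\B}(t))-\chi(\O_{\B}(t-1))=\chi(\O_{S}(t))$ where $S$ is a general
hyperplane section), the coefficient $a_2$ comes out as $-\lambda-g+1$,
matching the stated polynomial.

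The remaining two coefficients $a_1$ and $a_0$ are the nontrivial part.
The key idea is to exploit the cohomological vanishing already established.
From Lemma~\ref{lemma: cohomology ideal} I know that
$h^0(\PP^6,\I_{\B}(1))=h^0(\PP^6,\I_{\B}(2))=0$, that $h^0(\PP^6,\I_{\B}(3))=7$,
and crucially that $h^i(\PP^6,\I_{\B}(j))=0$ for all $i\geq1$ and $j\geq2$,
together with linear normality $h^1(\PP^6,\I_{\B}(1))=0$.
Using the exact sequence
$0\to\I_{\B}(j)\to\O_{\PP^6}(j)\to\O_{\B}(j)\to0$ and taking Euler
characteristics gives
$\chi(\O_{\B}(j))=\binom{j+6}{6}-\chi(\I_{\B}(j))$,
and the vanishing statements let me convert these Euler characteristics of
$\I_{\B}(j)$ into honest dimension counts $h^0$ for the relevant small values
of $j$. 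Concretely, $\chi(\O_{\B}(jH_{\B}))=\binom{j+6}{6}-h^0(\PP^6,\I_{\B}(j))$
for $j=1,2,3$, which yields three numerical conditions:
$\chi(\O_{\B}(H_{\B}))=\binom{7}{6}=7$,
$\chi(\O_{\B}(2H_{\B}))=\binom{8}{6}=28$, and
$\chi(\O_{\B}(3H_{\B}))=\binom{9}{6}-7=84-7=77$.

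I would then impose these three evaluations, together with the already-known
values $a_3=\lambda$ and $a_2=-\lambda-g+1$, on the binomial expansion of $P(t)$,
and solve the resulting linear system for $a_1$ and $a_0$.
Evaluating $P$ at $t=1,2,3$ expresses each $P(j)$ as a fixed linear combination
of $a_3,a_2,a_1,a_0$ through the binomial coefficients $\binom{j+k}{k}$;
substituting the numerical targets $7,28,77$ and the known $a_3,a_2$ produces
two independent equations in the two unknowns $a_1,a_0$, which determine them
uniquely. I expect this to deliver
$a_1=-6\lambda+4g+45$ and $a_0=14\lambda-6g-113$, as claimed.
The only subtle point—and the step I would watch most carefully—is ensuring
that the vanishing hypotheses genuinely apply at $j=1$: linear normality is
exactly what guarantees $\chi(\I_{\B}(1))=-h^0(\PP^6,\I_{\B}(1))=0$ rather than
being contaminated by higher cohomology, so this input from
Lemma~\ref{lemma: cohomology ideal} is essential and cannot be omitted.
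With those cohomological inputs secured, the remainder is a routine solution
of a small linear system.
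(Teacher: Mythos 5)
Your overall strategy coincides with the paper's: expand $\chi(\O_{\B}(tH_{\B}))$ in the binomial basis, read off the top two coefficients from $\lambda$ and the sectional genus $g$, and pin down the remaining two coefficients by evaluating at small twists via the cohomology of $\I_{\B,\PP^6}$. The two evaluations $\chi(\O_{\B}(2))=28$ and $\chi(\O_{\B}(3))=77$ are exactly the conditions the paper uses, and they alone determine $a_1$ and $a_0$, giving the stated polynomial.

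However, your third condition $\chi(\O_{\B}(H_{\B}))=7$ is a genuine error. Lemma \ref{lemma: cohomology ideal} gives $h^i(\I_{\B,\PP^6}(j))=0$ for $i\geq1$ only when $j\geq 2$ (the vanishing \eqref{vanishingLemma} applies only for $k\geq k_0=2$); at $j=1$ one knows only $h^0(\I_{\B,\PP^6}(1))=h^1(\I_{\B,\PP^6}(1))=0$. Linear normality therefore gives $h^0(\O_{\B}(1))=7$, \emph{not} $\chi(\O_{\B}(1))=7$: the groups $h^1(\O_{\B}(1))$, $h^2(\O_{\B}(1))$, $h^3(\O_{\B}(1))$ (equivalently $h^i(\I_{\B,\PP^6}(1))$ for $i\geq2$) are not controlled at this stage. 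Indeed, the completed lemma yields $\chi(\O_{\B}(1))=3\lambda-g-20$, so your condition amounts to the extra constraint $3\lambda-g=27$, which is false as an identity in $(\lambda,g)$ --- for instance the pair $(8,1)$ surviving in Lemma \ref{lemma extreme} violates it --- and imposing it would wrongly cut down the set of admissible pairs later in the argument. Concretely, your three evaluations in the two unknowns $a_1,a_0$ form an overdetermined, generically inconsistent system: solving with $t=1,2$ would give $a_1=-3\lambda+3g+18$ rather than $-6\lambda+4g+45$. You land on the correct coefficients only because you effectively solved with $t=2,3$, i.e.\ with the paper's two legitimate conditions; your closing claim that the $j=1$ input ``is essential and cannot be omitted'' is exactly backwards --- it must be omitted.
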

\begin{proof}
 From Lemma \ref{lemma: cohomology ideal} and 
the exact sequence
$0\rightarrow \I_{\B,\PP^6}\rightarrow \O_{\PP^6}\rightarrow \O_{\B}\rightarrow 0$,
we deduce two conditions for the Hilbert polynomial of $\B$. That is
\begin{align*}
\chi(\O_{\B}(3)) &= h^0(\O_{\B}(3)) = h^0(\O_{\PP^6}(3)) - h^0(\I_{\B,\PP^6}(3)) = 77, \\
\chi(\O_{\B}(2)) &= h^0(\O_{\B}(2)) = h^0(\O_{\PP^6}(2)) = 28.
\qedhere
\end{align*}
\end{proof}
 \begin{lemma}\label{lemmaChern}  
 Let $K_{\B}$ and $H_{\B}$ be, respectively, a canonical divisor and
 a hyperplane section divisor of $\B$.
 For $i=1,2,3$, 
 let $c_i=c_i(\T_{\B})\,  H_{\B}^{3-i}$ (resp. $\mathfrak{c}_i=c_i(\N_{\B,\PP^6})\,  H_{\B}^{3-i}$) be
 the degree of the $i$-th Chern class of the tangent (resp. normal) bundle of $\B$,
 and let 
 $s_i=s_i(\N_{\B,\PP^6})\,  H_{\B}^{3-i}$ (resp. $\mathfrak{s}_i=s_i(\T_{\B})\,  H_{\B}^{3-i}$) be
 the degree of the $i$-th Segre class of the normal (resp. tangent) bundle of $\B$.
 Also, let $S\subset\PP^5$ denote  a smooth hyperplane section of $\B$. 
 Then, with this notation, we have:
\begin{enumerate}[(i)]
\item\label{cherniLemma} $c_1 = 2\lambda-2g+2$,
$c_2 = -29\lambda+16g+222$,
$c_3 = 230\lambda-102g-1788$;
\item\label{chernSEIILemma} $s_1 = -5\lambda-2g+2$,
$s_2 = -15\lambda+30g+208$,
$s_3 = 405\lambda-270g-3286$;
\item $\mathfrak{c}_1=5 \lambda+2 g-2$,
$\mathfrak{c}_2=-3 \lambda+12 g+100$,
$\mathfrak{c}_3=\lambda^2$;
\item $\mathfrak{s}_1 = -2\lambda+2g-2$,
$\mathfrak{s}_2 = -10\lambda-2g+114$,
$\mathfrak{s}_3 = \lambda^2+77\lambda-28g-756$;
\item\label{powerK} $K_{\B}\,H_{\B}^2 = -2\lambda+2g-2$,
$K_{\B}^2\,H_{\B} = -39\lambda+14g+336$,
$K_{\B}^3 = \lambda^2-77\lambda+14g+672$;
\item\label{sezHiperpianagener} $K_S\,H_S=-\lambda+2g-2$, $K_{S}^2 = -42\lambda+18g+332$,
      $c_2(\T_{S}) = -30\lambda+18g+220$.
\end{enumerate}
\end{lemma}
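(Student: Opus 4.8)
My plan is to pin down each intersection number from three independent sources, in increasing order of subtlety: Hirzebruch--Riemann--Roch on the threefold $\B$, the Whitney and self-intersection relations attached to the embedding $\B\subset\PP^6$, and finally the two projective degrees of $\varphi$ that are forced by the type $(3,5)$. First I would expand $\chi(\O_{\B}(tH_{\B}))$ by Hirzebruch--Riemann--Roch as a cubic in $t$ with coefficients $\tfrac16 H_{\B}^3$, $\tfrac14\,c_1 H_{\B}^2$, $\tfrac1{12}(c_1^2+c_2)H_{\B}$ and $\tfrac1{24}\,c_1 c_2$ (writing $c_i=c_i(\T_{\B})$), and match it termwise against the Hilbert polynomial of Lemma \ref{hilbertPolB}. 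This yields $H_{\B}^3=\lambda$, the value $c_1 H_{\B}^2=2\lambda-2g+2$ (equivalent to $2g-2=(K_{\B}+2H_{\B})H_{\B}^2$), the number $c_1 c_2=24\,\chi(\O_{\B})$, and the combination $(c_1^2+c_2)H_{\B}$. The crucial point to keep in mind is that Riemann--Roch returns only the \emph{sum} $c_1^2 H_{\B}+c_2 H_{\B}$, not its two summands.

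Next I would exploit the embedding. From the Whitney identity $c(\T_{\PP^6})|_{\B}=c(\T_{\B})\,c(\N_{\B,\PP^6})$ with $c(\T_{\PP^6})|_{\B}=(1+H_{\B})^7$, the degree-one term gives $c_1(\N_{\B,\PP^6})=7H_{\B}+K_{\B}$, hence $\mathfrak{c}_1=5\lambda+2g-2$, while the degree-two and degree-three terms express $\mathfrak{c}_2$ and $\mathfrak{c}_3$ through the tangent data. The one genuinely new scalar here comes from the self-intersection formula: the class of $\B$ in $\PP^6$ is $\lambda H^3$, so $\mathfrak{c}_3=c_3(\N_{\B,\PP^6})=\deg(\B)^2=\lambda^2$, which---read backwards through the degree-three Whitney relation---amounts to a single linear equation among $c_2 H_{\B}$, $K_{\B}^2 H_{\B}$, $K_{\B}^3$ and $c_3$.

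The main obstacle is that these two sources are underdetermined: they leave me two equations short, precisely those needed to separate $K_{\B}^2 H_{\B}$ from $c_2 H_{\B}$ and to isolate $K_{\B}^3$. For a general threefold in $\PP^6$ these numbers are not functions of $(\lambda,g)$ at all, so the extra input must come from the Cremona structure. Here I would call on Fact \ref{factSpecialCremona2} with the two projective degrees forced by the type, namely $\deg_5(\varphi)=\deg_1(\varphi^{-1})=5$ and $\deg_6(\varphi)=1$. Substituting these into \eqref{segresDegs} produces two linear relations on the Segre degrees $s_1,s_2,s_3$ of $\N_{\B,\PP^6}$. Since $s_1=-\mathfrak{c}_1$ is already known, the relation from $\deg_5(\varphi)=5$ fixes $s_2$, hence $c_2 H_{\B}$ (and then $K_{\B}^2 H_{\B}$ from the Riemann--Roch sum, and $\mathfrak{c}_2$ from Whitney); the relation from $\deg_6(\varphi)=1$ fixes $s_3$, and inverting the total Chern class via $s_3=-c_1(\N_{\B,\PP^6})^3+2\,c_1(\N_{\B,\PP^6})c_2(\N_{\B,\PP^6})-c_3(\N_{\B,\PP^6})$, with $c_3(\N_{\B,\PP^6})=\lambda^2$, isolates $K_{\B}^3$. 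Once $c_2 H_{\B}$, $K_{\B}^2 H_{\B}$ and $K_{\B}^3$ are in hand, the degree-three Whitney relation of the previous paragraph returns $c_3=c_3(\T_{\B})$.

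Everything remaining is formal bookkeeping. I would read off all Segre degrees $s_i=s_i(\N_{\B,\PP^6})$ and $\mathfrak{s}_i=s_i(\T_{\B})$ by inverting the respective total Chern classes, and I would obtain the invariants (vi) of the smooth hyperplane section $S$ from adjunction $K_S=(K_{\B}+H_{\B})|_S$ and the normal sequence $0\to\T_S\to\T_{\B}|_S\to\O_S(H_S)\to0$; these give $K_S H_S=(K_{\B}+H_{\B})H_{\B}^2$, $K_S^2=(K_{\B}+H_{\B})^2 H_{\B}$ and $c_2(\T_S)=c_2(\T_{\B})H_{\B}+K_S H_S$. As a consistency check, the same numbers must arise from Noether's formula $12\,\chi(\O_S)=K_S^2+c_2(\T_S)$ applied to the Hilbert polynomial of $S=\B\cap\PP^5$, which is the first difference of that of $\B$; the agreement of these two routes is what convinces me the computation is correct.
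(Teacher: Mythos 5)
Your proposal is correct and follows essentially the same route as the paper: Hirzebruch--Riemann--Roch matched against the Hilbert polynomial of Lemma \ref{hilbertPolB}, the Whitney relations coming from the normal bundle sequence, one embedding relation (your self-intersection identity $c_3(\N_{\B,\PP^6})=\lambda^2$ is precisely the double point formula \eqref{doublePointFormula} that the paper cites), and the two projective-degree equations $\deg_5(\varphi)=5$, $\deg_6(\varphi)=1$ from Fact \ref{factSpecialCremona2}, finished off by adjunction and the normal sequence for the hyperplane section $S$. The only difference is organizational: the paper assembles all these relations into one linear system and solves it at once, whereas you solve them sequentially (which also makes transparent why exactly two equations must come from the Cremona structure).
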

\begin{proof} 
 For every smooth threefold $\B$,
 by Hirzebruch-Riemann-Roch formula (see \cite[App. A, Exercise~6.7]{hartshorne-ag})
 and using that $s(\T_{\B})=c(\T_{\B})^{-1}$ we deduce:
\begin{align}
    s_1(\T_{\B})\,  H_{\B}^2 &= -c_1(\T_{\B})\,  H_{\B}^2=K_{\B}\,  H_{\B}^2 , \\
 s_2(\T_{\B})\,  H_{\B} &= c_1(\T_{\B})^2\,  H_{\B}-c_2(\T_{\B})\,  H_{\B} 
 = K_{\B}^2\,  H_{\B}-c_2(\T_{\B})\,  H_{\B}, \\
 \begin{split}
s_3(\T_{\B}) &= -c_1(\T_{\B})^3+2c_1(\T_{\B})\,  c_2(\T_{\B})-c_3(\T_{\B})  \\
&=  K_{\B}^3+48\chi(\O_{\B})-c_3(\T_{\B}),  
\end{split} \\
K_{\B}\,  H_{\B}^2 &= 2\,(g-1-H_{\B}^3), \\
 K_{\B}^2\,  H_{\B} &=  
12(\chi(\O_{\B}(H_{\B}))-\chi(\O_{\B})) -2\, H_{\B}^3 +3\, K_{\B}\,  H_{\B}^2  -c_2(\T_{\B})\,  H_{\B}.
\end{align}
Using that $\B$ is embedded in $\PP^6$, we also obtain the relation 
(see \cite[p.~543]{livorni-sommese-15}): 
\begin{equation}\label{doublePointFormula}
K_{\B}^3  
= c_3(\T_{\B})+7\,c_2(\T_{\B})\,  H_{\B}-48\,\chi(\O_{\B})+(H_{\B}^3)^2-35\,H_{\B}^3
 -21\,K_{\B}\,  H_{\B}^2-7\,K_{\B}^2\,  H_{\B}.
\end{equation}
Moreover, from the exact sequence
$0\rightarrow\mathcal{T}_{\B}\rightarrow 
\mathcal{T}_{\PP^6}|_{\B}\rightarrow\mathcal{N}_{\B,\PP^6}\rightarrow0$
and since $s(\N_{\B,\PP^6})=c(\N_{\B,\PP^6})^{-1}$ and $s(\T_{\B})=c(\T_{\B})^{-1}$, we deduce:
\begin{align}
c_1(\T_{\B})\,H_{\B}^2 &= 7\,H_{\B}^3+s_1(\N_{\B,\PP^6})\, H_{\B}^2,\\ 
c_2(\T_{\B})\,H_{\B} &= 21\,H_{\B}^3+7\,s_1(\N_{\B,\PP^6})\, H_{\B}^2+s_2(\N_{\B,\PP^6})\, H_{\B}, \\
 c_3(\T_{\B}) &= 35\,H_{\B}^3+21\,s_1(\N_{\B,\PP^6})\, H_{\B}^2+7\,s_2(\N_{\B,\PP^6})\, H_{\B}+s_3(\N_{\B,\PP^6}) ; 
\end{align}
\begin{align}
 c_1(\N_{\B,\PP^6})\,H_{\B}^2 &= 7\,H_{\B}^3+s_1(\T_{\B})\, H_{\B}^2  , \\
 c_2(\N_{\B,\PP^6})\,H_{\B} &=  21\,H_{\B}^3+7\,s_1(\T_{\B})\, H_{\B}^2+s_2(\T_{\B})\, H_{\B}  ,  \\
 c_3(\N_{\B,\PP^6}) &=    35\,H_{\B}^3+21\,s_1(\T_{\B})\, H_{\B}^2+7\,s_2(\T_{\B})\, H_{\B}+s_3(\T_{\B})   .
\end{align}
Now, using that $\B$ is the base locus of a Cremona transformation 
of $\PP^6$ of type $(3,5)$, 
from Fact~\ref{factSpecialCremona2} we obtain: 
\begin{align}
 1 = \deg_6(\varphi) &=
 -540\,H_{\B}^3-135 \, s_1(\N_{\B,\PP^6})\, H_{\B}^2-18\,s_2(\N_{\B,\PP^6})\, H_{\B}  
        -s_3(\N_{\B,\PP^6})+729 , 
\\
5 = \deg_5(\varphi) &=  -90\,H_{\B}^3-15\,s_1(\N_{\B,\PP^6})\, H_{\B}^2-s_2(\N_{\B,\PP^6})\, H_{\B}+243.
\end{align}
Finally,  for every smooth threefold $\B$, if $S$ is a smooth hyperplane section,
 from the exact sequence
$0\rightarrow \T_S \rightarrow \T_{\B}|_{S}\rightarrow \O_{S}(H_S)\rightarrow 0 $,
we get  
\begin{equation}\label{EquazioneC2TS}
c_2(\T_S)=c_2(\T_{\B})\,H_{\B}+K_S\,H_S
=c_2(\T_{\B})\,H_{\B}+K_{\B}\,H_{\B}^2+H_{\B}^3 ,
\end{equation} 
and we also have 
\begin{equation}\label{EquazioneKS2}
K_{S}^2=12\,\chi(\O_S)-c_2(\T_S)=
12\,(\chi(\O_{\B})-\chi(\O_{\B}(-H_{\B})))-c_2(\T_S)  .
\end{equation}
Now, using also Lemma \ref{hilbertPolB}, the proof is reduced to solving a system of linear equations with 
coefficients rational functions of $\lambda$ and $g$.
\end{proof}
\begin{lemma}\label{CremonaDegrees}
The projective degrees of $\varphi$ are as follows:
 $\deg_0(\varphi)=1$, $\deg_1(\varphi)=3$, $\deg_2(\varphi)=9$, 
 $\deg_3(\varphi)=27-\lambda$, $\deg_4(\varphi)=-7\lambda+2g+79$,
 $\deg_5(\varphi)=5$, $\deg_6(\varphi)=1$.
\end{lemma}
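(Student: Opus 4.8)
The plan is to compute the projective degrees $\deg_k(\varphi)$ directly from Fact~\ref{factSpecialCremona2}, which expresses them in terms of the Segre classes $s_i(\B)=s_i(\N_{\B,\PP^6})H_{\B}^{3-i}$ of the normal bundle already determined in Lemma~\ref{lemmaChern}\eqref{chernSEIILemma}. Since $\varphi$ is of type $(3,5)$ with $n=6$, $r=3$, and $\delta_1=3$, the formula \eqref{segresDegs} gives each $\deg_{n-k}(\varphi)=\deg_k(\varphi^{-1})$ as an explicit polynomial in $\delta_1=3$, $\lambda=\deg(\B)=H_{\B}^3$, and the $s_{r-i}(\B)$. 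The first three values $\deg_0(\varphi)=1$, $\deg_1(\varphi)=\delta_1=3$, and $\deg_5(\varphi)=\delta_2=5$, $\deg_6(\varphi)=1$ are immediate from the first line of Fact~\ref{factSpecialCremona2} together with the identities $\deg_5(\varphi)=\deg_1(\varphi^{-1})=\delta_2$ and $\deg_6(\varphi)=\deg_0(\varphi^{-1})=1$.

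For the remaining middle degrees I would substitute $n=6$, $r=3$, $\delta_1=3$ into \eqref{segresDegs}, obtaining for each $k=0,\dots,3$
\[
\deg_{6-k}(\varphi)=3^{6-k}-\binom{6-k}{3-k}3^{3-k}\lambda-\sum_{i=k}^{2}\binom{6-k}{i-k}3^{i-k}s_{3-i}(\B).
\]
Evaluating at $k=3$ gives $\deg_3(\varphi)=27-\lambda$ since only the $\lambda$-term survives (the binomial $\binom{3}{0}3^0=1$ and the Segre sum is empty). At $k=2$ I get $\deg_4(\varphi)=81-\binom{4}{1}3\,\lambda-\binom{4}{0}s_1(\B)$; inserting $s_1=-5\lambda-2g+2$ from Lemma~\ref{lemmaChern}\eqref{chernSEIILemma} and simplifying yields $\deg_4(\varphi)=-7\lambda+2g+79$. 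The two lowest cases $k=0,1$ give $\deg_6(\varphi)$ and $\deg_5(\varphi)$, which serve as a consistency check: plugging in $s_1,s_2,s_3$ from Lemma~\ref{lemmaChern}\eqref{chernSEIILemma} these must collapse (after the $\lambda$- and $g$-dependent terms cancel) to the constants $1$ and $5$ dictated by the type $(3,5)$.

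This is essentially a bookkeeping computation, so the only real subtlety is ensuring the binomial coefficients and the index range of the Segre sum in \eqref{segresDegs} are applied correctly for each $k$. The most error-prone step is the lowest degree $\deg_6(\varphi)$, which involves all three Segre classes $s_1,s_2,s_3$ with the largest binomial weights $\binom{6}{3}=20$, $\binom{6}{2}=15$, $\binom{6}{1}=6$; here the $\lambda^2$, $\lambda$, and $g$ contributions coming from $s_3=405\lambda-270g-3286$ and the lower Segre classes must cancel exactly to leave the integer $1$. Since Lemma~\ref{lemmaChern} was itself derived partly from the constraints $\deg_5(\varphi)=5$ and $\deg_6(\varphi)=1$ via Fact~\ref{factSpecialCremona2}, verifying that these two identities are recovered here is automatic, and the genuinely new content of the lemma is the pair of formulas for $\deg_3(\varphi)$ and $\deg_4(\varphi)$.
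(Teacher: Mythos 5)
Your proposal is correct and is exactly the paper's argument: the paper's proof consists of the single line that the lemma ``follows directly from Lemma~\ref{lemmaChern}(\ref{chernSEIILemma}) and Fact~\ref{factSpecialCremona2},'' and you have simply carried out that substitution explicitly (with the right specialization $n=6$, $r=3$, $\delta_1=3$, and the correct observation that recovering $\deg_5=5$ and $\deg_6=1$ is automatic since those values were already used as constraints in deriving Lemma~\ref{lemmaChern}).
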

\begin{proof}
It follows directly from 
Lemma \ref{lemmaChern}(\ref{chernSEIILemma}) and Fact \ref{factSpecialCremona2}. 
\end{proof}
\subsection{Reducing to a short list of not excluded cases}
In this subsection we prove the following:
\begin{lemma}\label{lemma extreme}
 There are $33$ not excluded pairs $(\lambda,g)$. These are the following:
 $(8,1)$, $(9,3)$, $(9,4)$, $(10,5)$, $(10,6)$, $(11,7)$, $(11,8)$, $(11,9)$, 
 $(12,9)$, $(12,10)$, $(12,11)$, $(12,12)$, $(13,12)$,
$(13,13)$, $(13,14)$, $(13,15)$, $(14,14)$, $(14,15)$, 
$(14,16)$, $(14,17)$, $(14,18)$, $(15,17)$, $(15,18)$, $(15,19)$, $(15,20)$,
$(15,21)$, $(16,21)$, $(16,22)$, $(16,23)$, $(17,24)$, $(17,25)$, $(18,27)$, $(18,28)$.
 \end{lemma}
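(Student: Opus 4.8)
The plan is to translate every numerical constraint available into an inequality between the two unknown integers $\lambda$ and $g$, and then to enumerate the finitely many solutions. This is feasible because, by Lemmas \ref{hilbertPolB}, \ref{lemmaChern} and \ref{CremonaDegrees}, the Hilbert polynomial of $\B$, all the Chern and Segre numbers of $\B$ and of its general sections, and the projective degrees of $\varphi$ are now explicit functions of $\lambda$ and $g$ (linear, except for $K_{\B}^3$ and a few Segre numbers which are quadratic in $\lambda$).

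First I would exploit the projective degrees through Fact \ref{factCremona}. Positivity $\deg_k(\varphi)\geq1$ applied to $\deg_3(\varphi)=27-\lambda$ gives $\lambda\leq26$, while $\deg_4(\varphi)=-7\lambda+2g+79\geq1$ bounds $g$ from below. The Hodge-type concavity $\deg_{k-1}(\varphi)\deg_{k+1}(\varphi)\leq\deg_k(\varphi)^2$ does most of the work: from $\deg_4(\varphi)\deg_6(\varphi)\leq\deg_5(\varphi)^2$ one gets $\deg_4(\varphi)\leq25$, i.e. $2g\leq7\lambda-54$; from $\deg_2(\varphi)\deg_4(\varphi)\leq\deg_3(\varphi)^2$ one gets the quadratic upper bound $18g\leq\lambda^2+9\lambda+18$; and $\deg_3(\varphi)\deg_5(\varphi)\leq\deg_4(\varphi)^2$ forces $\deg_4(\varphi)^2\geq5(27-\lambda)$, a lower bound on $g$ that turns out to be sharp for the larger values of $\lambda$.

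These inequalities are not sharp in the intermediate range, so next I would impose the positivity of the invariants of the general sections computed in Lemma \ref{lemmaChern}(\ref{sezHiperpianagener}). The general hyperplane section $S\subset\PP^5$ is a smooth surface with $\chi(\O_S)=-6\lambda+3g+46$; after showing that its irregularity vanishes---so that $\chi(\O_S)=1+p_g(S)\geq1$---one obtains $g\geq2\lambda-15$, which together with $2g\leq7\lambda-54$ already yields $\lambda\geq8$. Combining this with the non-negativity of $h^0$ of the small twists of $\I_{\B,\PP^6}$ and of $\O_{\B}$ (read off from Lemma \ref{hilbertPolB} via the vanishing \eqref{vanishingLemma} and Lemma \ref{lemma: cohomology ideal}), together with the remaining integrality and positivity constraints on $S$ and on the general curve section $C\subset\PP^4$, cuts the admissible window of $g$ down to the stated one, non-empty only for $8\leq\lambda\leq18$.

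Finally, intersecting all the constraints confines $(\lambda,g)$ to a finite rectangle, and a direct enumeration over the integers---mechanical, and conveniently delegated to a computer as noted in the Introduction---produces exactly the $33$ listed pairs. I expect the main obstacle to be the band $10\leq\lambda\leq13$, where the projective-degree concavity alone is too weak at both ends: there one must pin down invariant inequalities for $S$ and $C$ that are genuinely valid yet strong enough to discard every spurious candidate (for instance $(\lambda,g)=(13,11)$) without losing any of the $33$ admissible pairs, and then check the finite bookkeeping carefully.
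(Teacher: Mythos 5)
Your setup is sound and half of your argument coincides with the paper's: the five concavity/positivity inequalities you extract from Fact \ref{factCremona} via Lemma \ref{CremonaDegrees} (namely $7\lambda-2g-54\geq0$, $\lambda^2+9\lambda-18g+18\geq 0$, and $\deg_4(\varphi)^2\geq\deg_3(\varphi)\deg_5(\varphi)$, etc.) are exactly the ones the paper uses in its final step. The genuine gap is in what you propose as the other half. The paper's second ingredient is the set of general threefold inequalities of Livorni--Sommese \cite[Theorem~0.7.3]{livorni-sommese-15}, stated as \eqref{LiSoIneq1}--\eqref{LiSoIneq4}, which via Lemmas \ref{hilbertPolB} and \ref{lemmaChern} become $g\leq(\lambda^2+7\lambda-102)/10$, $g\geq(5\lambda-52)/4$, $g\geq(145\lambda-1113)/70$, and $g\geq(147\lambda-1242)/74$. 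These are not cosmetic: the pair $(13,11)$ that you yourself flag survives \emph{all} of your constraints and is killed only by \eqref{LiSoIneq3}, i.e. $2c_2(\T_S)\geq c_3(\T_{\B})-2g+2$, which at $\lambda=13$ forces $g\geq 772/70>11$. Your proposal ends by saying one ``must pin down invariant inequalities \dots strong enough to discard every spurious candidate,'' which is an acknowledgement that the decisive step is missing, not a proof of it.

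Moreover, the one concrete substitute you do offer is itself flawed. You claim $\chi(\O_S)=-6\lambda+3g+46\geq 1$ ``after showing that its irregularity vanishes,'' but no such argument is available at this stage: by Lefschetz, $q(S)=h^1(\O_{\B})$, and there is no a priori reason for this to vanish for the base locus of a special Cremona transformation (compare the septic elliptic scroll in $\PP^6$, a base locus with $q=1$, in the surface classification). The vanishing of $h^1(\O_{\B})$ here is only known a posteriori, once Theorem \ref{MainTheorem} is proved. And even if one grants it, the resulting bound $g\geq 2\lambda-15$ is strictly weaker than \eqref{LiSoIneq3}: at $\lambda=13$ it gives $g\geq 11$ and so retains $(13,11)$. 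The remaining constraints you invoke (non-negativity of $h^0$ of small twists of $\I_{\B,\PP^6}$, ``positivity constraints'' on the curve section) are too vague to close this: without inequalities that genuinely involve $c_2(\T_{\B})H_{\B}$, $c_3(\T_{\B})$ and $K_{\B}^2 H_{\B}$ --- which is precisely what the double-point/Bogomolov--Miyaoka--Yau--type results of \cite{livorni-sommese-15} supply --- the enumeration does not terminate at the stated list of $33$ pairs.
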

\begin{proof}
Firstly, we note that there are a finite number of not excluded pairs $(\lambda,g)$.
Indeed, since $\B$ is a smooth nondegenerate scheme 
 cut out by cubics and of 
  codimension $3$, we have $3\leq\lambda\leq27$. Further, we have $g\geq0$ and 
  the well-known 
  Castelnuovo’s bound (see e.g. \cite[p.~4]{besana-biancofiore-numerical}) provides
  an upper bound for $g$ as a function of $\lambda$. 
This rough argument leaves $889$ pairs $(\lambda,g)$.

Now, from \cite[Theorem~0.7.3]{livorni-sommese-15}, 
 for  any smooth threefold $(\B,H_{\B})$, denoting by $S$ a smooth hyperplane section, 
 one has the inequalities:
 \begin{gather}
   K_{\B}^3+6\,K_{\B}^2\,H_{\B}+15\,K_{\B}\,H_{\B}^2+20\,H_{\B}^3-c_3(\T_{\B})+
         48\,\chi(\O_{\B})-6\,c_2(\T_{\B})\,H_{\B}\geq 0, \label{LiSoIneq1} \\
   K_S^2+4\,K_S\,H_S+6\,H_S^2\geq c_2(\T_S), \label{LiSoIneq2}\\
   2\,c_2(\T_S)\geq c_3(\T_{\B})-2g+2, \label{LiSoIneq3}\\
   -24\,\chi(\O_{\B})+3\,K_{\B}^2\,H_{\B} 
         +15\,K_{\B}\,H_{\B}^2+2\,c_2(\T_{\B})\,H_{\B}+20\,H_{\B}^3+c_3(\T_{\B})\geq0 . \label{LiSoIneq4}
 \end{gather}
 So, applying Lemmas \ref{hilbertPolB} and \ref{lemmaChern}, respectively, we obtain:
\begin{equation}
 \begin{array}{ll} 
 g \leq (\lambda^2+7\lambda-102)/10, &
 g \geq (5\lambda-52)/4, \\
 g \geq (145\lambda-1113)/70, &
 g \geq (147\lambda-1242)/74. 
 \end{array}
 \end{equation}
In particular, the first inequality provides a refinement of the Castelnuovo’s bound. 
All these inequalities restrict the set of not excluded pairs $(\lambda,g)$ 
to a set of $312$ elements.

Finally, using Lemma \ref{CremonaDegrees}, 
we apply all the inequalities in Fact \ref{factCremona}.
This turns out to be equivalent to applying just the following: 
\begin{equation}\label{essentialCremonaIneq}
 \begin{array}{c} 
 \begin{array}{cc}
\deg_1(\varphi)\,\deg_3(\varphi)\geq\deg_4(\varphi), &
\deg_1(\varphi)\,\deg_4(\varphi)\geq\deg_5(\varphi), 
\end{array} \\
\begin{array}{cc}
{\deg_3(\varphi)}^2\geq\deg_2(\varphi)\,\deg_4(\varphi), &
{\deg_4(\varphi)}^2\geq\deg_3(\varphi)\,\deg_5(\varphi), 
\end{array} \\
\begin{array}{c}
{\deg_5(\varphi)}^2\geq\deg_4(\varphi)\,\deg_6(\varphi).
 \end{array}
 \end{array}
 \end{equation}
Which respectively become:
\begin{equation}
 \begin{array}{c} 
 \begin{array}{cc}
4 \lambda-2 g+2\geq0, &
-21 \lambda+6 g+232\geq0,
\end{array} \\
\begin{array}{cc}
\lambda^2+9 \lambda-18 g+18\geq0, &
49 \lambda^2-28 \lambda g+4 g^2-1101 \lambda+316 g+6106\geq0,
\end{array} \\
\begin{array}{c}
7 \lambda-2 g-54\geq0.
 \end{array}
 \end{array}
 \end{equation}
This restricts the set of pairs $(\lambda,g)$ to the set of $33$ elements as stated.
\end{proof}
\subsection{Applying  adjunction theoretic results}
The following result, thanks to Fact \ref{listaCasi}, say us that $(\B,H_{\B})$ admits a unique minimal reduction.
\begin{lemma}\label{LemmaThereExistsReduction}
 The adjunction map $\Phi=\Phi_{|K_{\B}+2H_{\B}|}$ 
 is defined and has image of dimension $3$.
 In other words, $\B\subset\PP^6$ is not a del Pezzo variety, 
 and it is different from a scroll over either a curve or a surface, 
 as well as from a quadric fibration over a curve.
\end{lemma}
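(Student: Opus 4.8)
The goal is to rule out, for every one of the $33$ surviving pairs $(\lambda,g)$, the four ``bad'' cases of Fact \ref{listaCasi}, namely that $\Phi$ fails to be defined (i.e. $(\B,H_\B)$ is $(\PP^3,\O(1))$, a quadric, or a scroll over a curve), or that $\dim\Phi(\B)\in\{0,1,2\}$. The strategy is to match each such case against the numerical data already computed in Lemmas \ref{hilbertPolB}, \ref{lemmaChern}, and \ref{CremonaDegrees}, and to check that no pair in the list of Lemma \ref{lemma extreme} is compatible with the numerical constraints each case imposes. So the whole argument reduces to a finite numerical verification, which is exactly the style of reasoning announced in the introduction.

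The plan is to treat the cases in order of increasing $\dim\Phi(\B)$. First I would dispose of the two trivial possibilities for $\Phi$ undefined: $(\PP^3,\O(1))$ and the quadric $(Q^3,\O(1))$ are degenerate or of degree $1,2$, hence are excluded immediately since $\B$ is linearly normal (Lemma \ref{lemma: cohomology ideal}) of degree $\lambda\geq 8$ by Lemma \ref{lemma extreme}. Next, the del Pezzo case $\dim\Phi(\B)=0$ forces $K_\B\approx -2H_\B$, i.e. $K_\B\,H_\B^2=-2\lambda$; comparing with the value $K_\B\,H_\B^2=-2\lambda+2g-2$ from Lemma \ref{lemmaChern}\eqref{powerK} gives $g=1$, and then the remaining intersection numbers (say $K_\B^2\,H_\B$) must also be consistent with $K_\B=-2H_\B$, which pins down $\lambda$ and lets me check directly against the list. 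The scroll-over-a-curve case (which is also one of the $\Phi$-undefined possibilities) and the quadric-fibration-over-a-curve case $\dim\Phi(\B)=1$ are handled similarly: each is characterized by a relation of the form $K_\B\approx -rH_\B+\pi^\ast(\text{something})$ restricting to fibres, which translates into specific numerical identities (e.g. on $K_\B\,H_\B^2$ and the Chern numbers) that I then test against the $33$ pairs.

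The genuinely substantive case is $\dim\Phi(\B)=2$, the scroll over a smooth surface $Y$. Here a general fibre is a line embedded by $H_\B$, so $\B$ is a $\PP^1$-bundle over $Y$, and one has the standard scroll relations $K_\B+2H_\B=\pi^\ast L$ for an ample $L$ on $Y$, together with the Chern-class/Leray machinery expressing $c_i(\T_\B)$ and the Hilbert polynomial of $\B$ in terms of invariants of $Y$ (degree, genus, $K_Y^2$, $\chi(\O_Y)$). The key is that $(K_\B+2H_\B)^3=0$ because the class is pulled back from the surface $Y$; this is one rigid numerical equation in $\lambda$ and $g$, and combined with the analogous vanishings (e.g. $(K_\B+2H_\B)^2\,H_\B$ equals the degree of $Y$, an integer, and positivity of $\chi(\O_Y)$, etc.) it should cut the $33$ pairs down to none. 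I would extract these identities by substituting the expressions of Lemma \ref{lemmaChern} into the scroll relations and then scanning the finite list.

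I expect the scroll-over-a-surface case to be the main obstacle, for two reasons: it is the only one whose defining relations are not a simple proportionality $K_\B\approx cH_\B$, so the bookkeeping linking $c_i(\T_\B)$ to the invariants of the base surface $Y$ is heavier; and because the numbers are not wildly off, several pairs may satisfy the first obvious constraint $(K_\B+2H_\B)^3=0$, forcing me to bring in a second or third identity (integrality or positivity of the surface invariants, or the inequality $c_2(\T_\B)\,H_\B\geq 0$ style bounds) to finish the exclusion. Once all four bad cases are eliminated over the finite list, Fact \ref{listaCasi} leaves only $\dim\Phi(\B)=3$, and then $K_\B+2H_\B$ is nef and big and $(\B,H_\B)$ admits a unique minimal reduction, which is exactly the assertion.
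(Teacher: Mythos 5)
Your proposal is correct and follows essentially the same route as the paper: each bad case of Fact \ref{listaCasi} is converted into intersection-theoretic identities via Lemma \ref{lemmaChern}(\ref{powerK}) and then tested against the $33$ pairs of Lemma \ref{lemma extreme}. The paper merely streamlines the casework: a scroll over a curve forces $(K_{\B}+3H_{\B})^3=0$, while the del Pezzo, quadric-fibration-over-a-curve, and scroll-over-a-surface cases all force the single equation $(K_{\B}+2H_{\B})^3=0$; these become $\lambda^2-455\lambda+194g+3642=0$ and $\lambda^2-327\lambda+122g+2664=0$, neither of which has a solution among the $33$ pairs, so the extra identities you anticipated needing in the scroll-over-a-surface case turn out to be unnecessary.
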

\begin{proof}
If $\B$ is a scroll over a curve, we must have $(K_{\B}+3\,H_{\B})^3=0$;
while if $\B$ is a scroll over a surface, 
or a quadric fibration over a curve, or a del Pezzo variety, then 
we must have $(K_{\B}+2\,H_{\B})^3=0$.
From Lemma \ref{lemmaChern}(\ref{powerK}), 
these two relations, respectively, become  
$ \lambda^2-455\lambda+194g+3642 =0$ and $ \lambda^2-327\lambda+122g+2664=0$. 
But there is 
 no pair $(\lambda,g)$, not excluded by Lemma \ref{lemma extreme},
 that satisfies any of them.
\end{proof}
Let $(\R,H_{\R})$ denote the minimal reduction of $(\B,H_{\B})$ 
and let $\nu=\nu(\B,H_{\B})$ be the number of exceptional divisors on $(\B,H_{\B})$. 
In the following result, we compute the numerical invariants of $\R$ as 
functions of $\lambda$, $g$, and $\nu$.
\begin{lemma}\label{invariantiRid} We have
 \begin{enumerate}[(i)] 
 \item\label{hilberPolynomialR} $\chi(\O_{\R}(tH_{\R}))
=  (\lambda+\nu)\,\binom{t+3}{3} +(-\lambda-g-\nu+1)\,\binom{t+2}{2} +(-6\lambda+4g+45)\,(t+1)+(14\lambda-6g-113) $;
\item\label{powerKR} 
 $K_{\R}\,H_{\R}^2 = -2d+2g-2\nu-2 $, 
 $K_{\R}^2\,H_{\R} = -39\lambda+14g+4\nu+336$, 
 $K_{\R}^3 = \lambda^2-77\lambda+14g-8\nu+672$; 
\item 
 $c_1(\T_{\R})\,H_{\R}^2 = 2\lambda-2g+2\nu+2$, 
 $c_2(\T_{\R})\,H_{\R} = -29\lambda+16g+222$, 
 $c_3(\T_{\R})=230\lambda-102g-2\nu-1788 $; 
\item\label{seziperrid} for a smooth element $\mathfrak{P}\in|H_{\mathfrak{R}}|$, 
 $K_{\mathfrak{P}}^2 = -42\lambda+18g+\nu+332$,
      $c_2(\T_{\mathfrak{P}}) = -30\lambda+18g-\nu+220$. 
\end{enumerate}
\end{lemma}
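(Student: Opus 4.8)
The plan is to obtain the invariants of the reduction $(\R,H_{\R})$ directly from those of $(\B,H_{\B})$, computed in Lemmas \ref{hilbertPolB} and \ref{lemmaChern}, by tracking how each invariant changes under blowing down the $\nu$ exceptional divisors. Recall from Subsection \ref{adjunctionThery} that $\pi:\B\rightarrow\R$ is the blow-up at $\nu$ distinct points $p_1,\ldots,p_{\nu}$, with exceptional divisors $E_i=\pi^{-1}(p_i)$, and that $H_{\B}\approx\pi^{\ast}H_{\R}-\sum_i E_i$. The key observation is that blowing up points in a smooth threefold leaves $\chi(\O_{\B})=\chi(\O_{\R})$ unchanged and the sectional genus unchanged ($g(\R,H_{\R})=g(\B,H_{\B})=g$), while the degree increases by $\nu$, i.e.\ $H_{\R}^3=H_{\B}^3+\nu=\lambda+\nu$. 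So the first step is simply to record the standard intersection-theoretic identities for a blow-up of a smooth threefold at a point, namely $E_i^3=1$ (with the sign convention making $E_i^3=\pi^{\ast}(\cdot)$ vanish appropriately), $E_i\cdot\pi^{\ast}(\text{anything})=0$, $\pi^{\ast}K_{\R}\cdot E_i^2=0$, and $K_{\B}\approx\pi^{\ast}K_{\R}+2\sum_i E_i$.

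From here each claimed formula is a mechanical substitution. For part \eqref{hilberPolynomialR}, I would argue that $\chi(\O_{\R}(tH_{\R}))$ is a cubic polynomial in $t$ whose leading coefficient is $H_{\R}^3/6=(\lambda+\nu)/6$; then use $\chi(\O_{\R})=\chi(\O_{\B})$ together with $g(\R,H_{\R})=g$ to pin down the lower-order coefficients, which differ from those of $\chi(\O_{\B}(tH_{\B}))$ in Lemma \ref{hilbertPolB} only in the two top terms, precisely by the stated replacements $\lambda\mapsto\lambda+\nu$ in the $\binom{t+3}{3}$ coefficient and the corresponding $-\nu$ adjustment in the $\binom{t+2}{2}$ coefficient. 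For part \eqref{powerKR}, I substitute $K_{\B}\approx\pi^{\ast}K_{\R}+2\sum E_i$ and $H_{\B}\approx\pi^{\ast}H_{\R}-\sum E_i$ into the quantities $K_{\B}H_{\B}^2$, $K_{\B}^2H_{\B}$, $K_{\B}^3$ of Lemma \ref{lemmaChern}\eqref{powerK}, expand using the intersection identities above, and solve for the corresponding products on $\R$; the $\nu$-dependent correction terms $-2\nu,\ +4\nu,\ -8\nu$ are exactly what these expansions produce. Part (iii) follows from part \eqref{powerKR} via $c_1(\T_{\R})=-K_{\R}$, the relation $c_2(\T_{\R})H_{\R}=K_{\R}^2H_{\R}-12(\chi(\O_{\R}(H_{\R}))-\chi(\O_{\R}))+2H_{\R}^3-3K_{\R}H_{\R}^2$ (the same Riemann–Roch identity used in Lemma \ref{lemmaChern}, now read on $\R$), and the Noether-type relation $c_3(\T_{\R})=-K_{\R}^3-\cdots$; note $c_2(\T_{\R})H_{\R}$ comes out $\nu$-independent, which is a useful internal consistency check since $c_2$ and $H_{\R}$ both pull back cleanly.

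Finally, part \eqref{seziperrid} treats a smooth $\mathfrak{P}\in|H_{\R}|$ exactly as Lemma \ref{lemmaChern}\eqref{sezHiperpianagener} treats $S\subset\B$: I apply \eqref{EquazioneC2TS} and \eqref{EquazioneKS2} with $\R$ in place of $\B$, using $K_{\mathfrak{P}}H_{\mathfrak{P}}=(K_{\R}+H_{\R})H_{\R}^2$ and $c_2(\T_{\mathfrak{P}})=c_2(\T_{\R})H_{\R}+K_{\mathfrak{P}}H_{\mathfrak{P}}$, then $K_{\mathfrak{P}}^2=12\chi(\O_{\mathfrak{P}})-c_2(\T_{\mathfrak{P}})$ with $\chi(\O_{\mathfrak{P}})=\chi(\O_{\R})-\chi(\O_{\R}(-H_{\R}))$. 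Substituting the formulas already obtained in parts \eqref{hilberPolynomialR}–(iii) yields the stated $\nu$-linear expressions. I do not expect a genuine obstacle here: the whole lemma is a linear-algebra bookkeeping exercise once the blow-up identities are fixed. The only point demanding care is the sign and normalization convention for the exceptional divisors (so that, e.g., the coefficient of $\nu$ in $K_{\R}^3$ comes out $-8$ rather than some other value); getting the self-intersection conventions $E_i^3$ and the pullback-formula for $K_{\B}$ consistent with one another is where an arithmetic slip would propagate, so I would verify the answers against the $\nu=0$ case, where every formula must reduce exactly to the corresponding entry of Lemmas \ref{hilbertPolB} and \ref{lemmaChern}.
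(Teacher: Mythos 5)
Your overall strategy---blow-up bookkeeping that transfers the invariants of $(\B,H_{\B})$ from Lemmas \ref{hilbertPolB} and \ref{lemmaChern} to $(\R,H_{\R})$ via $K_{\B}\approx\pi^{\ast}K_{\R}+2\sum E_i$, $H_{\B}\approx\pi^{\ast}H_{\R}-\sum E_i$, $E_i^3=1$---is exactly the paper's, and it does settle part (\ref{hilberPolynomialR}), part (\ref{powerKR}), the first two formulas of part (iii), and part (\ref{seziperrid}). Two small slips: your displayed Riemann--Roch identity for $c_2(\T_{\R})\,H_{\R}$ is the \emph{negative} of the correct one (correctly applied it does give $c_2(\T_{\R})\,H_{\R}=c_2(\T_{\B})\,H_{\B}$, $\nu$-independent as you predict; the paper gets the same equality by citing the blow-up behaviour of $c_2$ from Griffiths--Harris); and in part (\ref{hilberPolynomialR}) the three conditions ``degree, $g$, $\chi(\O)$'' pin down only three of the four coefficients of a cubic polynomial---the clean statement to prove, as the paper does via Hirzebruch--Riemann--Roch, is $\chi(\O_{\R}(tH_{\R}))=\chi(\O_{\B}(tH_{\B}))+\nu\,t(t+1)(t+2)/6$, which is equivalent to the claimed change in the two top binomial coefficients.

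The genuine gap is the formula $c_3(\T_{\R})=230\lambda-102g-2\nu-1788$. There is no ``Noether-type relation'' expressing $c_3$ of an abstract polarized threefold in terms of $K^3$, $K^2H$, $KH^2$, $H^3$, $c_2H$ and $\chi(\O)$: the degree of $c_3$ is the topological Euler characteristic, an independent invariant. In Lemma \ref{lemmaChern} the relation you have in mind was $s_3(\T_{\B})=K_{\B}^3+48\chi(\O_{\B})-c_3(\T_{\B})$, and it was usable there only because $s_3(\T_{\B})$ had been computed from the embedding $\B\subset\PP^6$ (normal bundle sequence plus the projective degrees of $\varphi$); the reduction $\R$ carries no such embedding, so that input is unavailable and your ellipsis hides precisely the missing quantity. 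The cheapest fix is topological: each $E_i\simeq\PP^2$, so blowing up a point raises the Euler characteristic by $2$, whence $c_3(\T_{\R})=c_3(\T_{\B})-2\nu$, which together with Lemma \ref{lemmaChern}(\ref{cherniLemma}) gives the claim. The paper instead reaches the same identity $c_3(\T_{\R})=c_3(\T_{\B})-2\nu$ by invoking a double point formula valid for reductions of threefolds in $\PP^6$ (\cite[Lemma~3.3]{besana-biancofiore-numerical}), which supplies $s_3(\T_{\R})$ with its $\nu$-corrections, and then combining it with \eqref{doublePointFormula}. Without one of these inputs the third formula of (iii) is unproven---and it is not decorative, since $c_3(\T_{\R})$ enters the inequality of Fact \ref{LogGeneralIneq}(\ref{prop2p11p3}) used in the proof of Proposition \ref{casiPossi}.
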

\begin{proof}
Let $\pi:\B\rightarrow\R$ be the map of the blowing-up along 
the points $p_1,\ldots,p_{\nu}$,
and denote by $E_i=\pi^{-1}(p_i)$ the exceptional divisors.
Since $K_{\B}\approx \pi^{\ast}K_{\R}+2\sum_{i=1}^{\nu}E_i$ 
and $H_{\B}\approx\pi^{\ast}H_{\R}-\sum_{i=1}^{\nu}E_i$, one obtains 
$K_{\R}\,H_{\R}^2=K_{\B}\,H_{\B}^2-2\nu$, 
$K_{\R}^2\,H_{\R}=K_{\B}^2\,H_{\B}+4\nu$, and
$K_{\R}^3=K_{\B}^3-8\nu$. One also has
$c_1(\T_{\R})\,H_{\R}^2=c_1(\T_{\B})\,H_{\B}^2+2\nu$, and 
from \cite[p.~609]{griffiths-harris} it follows that 
$c_2(\T_{\R})\,H_{\R}=c_2(\T_{\B})\,H_{\B}$.
Now,  using Hirzebruch-Riemann-Roch formula,   
one deduces that 
for every $t\in\ZZ$, $\chi(\O_{\R}(tH_{\R}))=\chi(\O_{\B}(tH_{\B}))+\nu\,(t^3+3t^2+2t)/6$;
in particular, $\chi(\O_{\R}(tH_{\R}))=\chi(\O_{\B}(tH_{\B}))$ for $t=0,-1,-2$. 
As an application of double point formulas, 
see \cite[Lemma~3.3]{besana-biancofiore-numerical}, one deduces that 
$
s_3(\T_{\R}) = -21\,s_1(\T_{\R})\,H_{\R}^2-7\,s_2(\T_{\R})\,H_{\R}  +(H_{\R}^3-\nu)^2 -35\,H_{\R}^3+15\nu 
$,
and from this and \eqref{doublePointFormula}
one has:
\begin{align*}
 c_3(\T_{\R}) &= K_{\R}^3+48\,\chi(\O_{\R})-s_3(\T_{\R}) \\
  &= K_{\R}^3+48\,\chi(\O_{\R})+21\,K_{\R}\,H_{\R}^2+7\,(K_{\R}^2\,H_{\R}-c_2(\T_{\R})\,H_{\R}) \\ & \qquad  -(H_{\R}^3-\nu)^2 +35\,H_{\R}^3-15\,\nu \\ 
  &= 
  -7\,c_2(\T_{\B})\,H_{\B}
   +48\,\chi(\O_{\B})
  -\lambda^2  +35\lambda
  +21\,K_{\B}\,H_{\B}^2
 \\ & \qquad  +7\,K_{\B}^2\,H_{\B}
  +K_{\B}^3
  -2\nu  \\
 \mbox{(by \eqref{doublePointFormula})} 
 &= c_3(\T_{\B})-2\nu.
  \end{align*}
Finally, part (\ref{seziperrid}) is obtained 
through the analogue formulas of  
(\ref{EquazioneC2TS}) and (\ref{EquazioneKS2}).
\end{proof} 
\begin{proposition}\label{casiPossi}
One of the two following possibilities occurs:
\begin{enumerate}[(A)]
 \item $(\R,H_{\R})$ is of log-general type, and $(\lambda,g,\nu)\in\{(14, 15, 0),\  (18, 27, 0)\}$;
 \item $(\R,H_{\R})$ is a conic bundle over a surface, and 
 $\nu=\lambda^2-199\lambda+62g+1674$.
\end{enumerate} 
\end{proposition}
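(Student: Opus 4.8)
The plan is to apply the adjunction-theoretic dichotomy of Fact \ref{NefBigFact} to the minimal reduction $(\R,H_{\R})$, whose existence is guaranteed by Lemma \ref{LemmaThereExistsReduction}. By that lemma the adjunction map has a three-dimensional image, so $K_{\B}+2H_{\B}$ is nef and big and $(\R,H_{\R})$ is a genuine reduction with $K_{\R}+2H_{\R}$ ample. The starting observation is that either $K_{\R}+H_{\R}$ is nef and big --- in which case $(\R,H_{\R})$ is of log-general type --- or we land in one of the six exceptional cases \eqref{P3red}--\eqref{quadricFibrOverSurface} of Fact \ref{NefBigFact}. The strategy is to eliminate cases \eqref{P3red}--\eqref{delPezzoFibration}, leaving only the conic bundle case \eqref{quadricFibrOverSurface}, and then to pin down the admissible numerical data in each surviving branch.

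First I would dispose of the five non-conic-bundle exceptional cases \eqref{P3red}--\eqref{delPezzoFibration}. Each of these imposes a sharp numerical constraint expressible through the invariants of $\R$ computed in Lemma \ref{invariantiRid}: for $(\PP^3,\O(3))$ one has fixed values $H_{\R}^3=27$, $K_{\R}H_{\R}^2=-108$, etc.; for the quadric $(Q,\O_Q(2))$ one has $H_{\R}^3=16$ and $K_{\R}\approx-3H_{\R}/2$; the Mukai case forces $K_{\R}\approx-H_{\R}$; and the Veronese and del Pezzo fibrations over a curve each force the vanishing of an explicit intersection-theoretic expression, namely $(2K_{\R}+3H_{\R})$ restricted to a fibre, respectively $(K_{\R}+H_{\R})^2 H_{\R}=0$ for the del Pezzo fibration. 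Each such relation, after substituting the formulas of Lemma \ref{invariantiRid}(\ref{powerKR}), becomes a polynomial equation in $\lambda$, $g$, $\nu$, to be checked against the finite list of $33$ pairs $(\lambda,g)$ from Lemma \ref{lemma extreme} (with $\nu\geq0$); the point is that none of the $33$ pairs admits a nonnegative integer $\nu$ solving these equations, exactly as in the computation of Lemma \ref{LemmaThereExistsReduction}.

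Next I would treat the conic bundle case \eqref{quadricFibrOverSurface}. Here $K_{\R}+H_{\R}\approx\pi^{\ast}\mathcal{L}$ for $\pi:\R\to Y$ over a surface $Y$, and the defining vanishing is $(K_{\R}+H_{\R})^3=0$, since the class $K_{\R}+H_{\R}$ is pulled back from a surface. Expanding $(K_{\R}+H_{\R})^3=K_{\R}^3+3K_{\R}^2H_{\R}+3K_{\R}H_{\R}^2+H_{\R}^3$ and inserting Lemma \ref{invariantiRid}(\ref{powerKR}) together with $H_{\R}^3=\lambda+\nu$ yields a single linear relation in $\nu$; solving it produces precisely $\nu=\lambda^2-199\lambda+62g+1674$, the value claimed in case (B).

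**The hard part will be** the log-general-type branch (A), where I must show that among the $33$ pairs only $(14,15)$ and $(18,27)$ survive, each with $\nu=0$. The plan is to feed the full battery of log-general-type inequalities of Fact \ref{LogGeneralIneq} --- translated into $\lambda,g,\nu$ via the pluridegrees $d_j=(K_{\R}+H_{\R})^j H_{\R}^{3-j}$ computed from Lemma \ref{invariantiRid} --- against the finite list, treating $\nu$ as a nonnegative integer unknown. The inequalities \eqref{lemma1p3}--\eqref{prop2p8} bounding ratios of consecutive pluridegrees, the Hodge-type inequalities \eqref{hodgeIneq}--\eqref{hodge2Ineq}, and especially the Noether-type and Euler-characteristic inequalities \eqref{MiyaokaYauNoether}--\eqref{prop2p11p3} should cut the list drastically; the main subtlety is that $\nu$ must be eliminated simultaneously, so for each candidate $(\lambda,g)$ I expect only a short interval of admissible $\nu$, and checking that only $(14,15,0)$ and $(18,27,0)$ remain is the delicate bookkeeping step. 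This is exactly the kind of finite, mechanical search over integer tuples that the introduction warns is computer-assisted but not indispensable.
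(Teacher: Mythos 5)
Your overall route coincides with the paper's: Lemma \ref{LemmaThereExistsReduction} together with Fact \ref{listaCasi} produces the minimal reduction, Fact \ref{NefBigFact} gives the dichotomy, Lemma \ref{invariantiRid} turns each exceptional case into polynomial conditions on $(\lambda,g,\nu)$ to be tested against the $33$ pairs of Lemma \ref{lemma extreme}, the conic bundle case gives $(K_{\R}+H_{\R})^3=0$, hence $\nu=\lambda^2-199\lambda+62g+1674$, exactly as in the paper, and the log-general branch is handled by the pluridegree inequalities of Fact \ref{LogGeneralIneq} (the paper needs only $d_1\geq1$ for finiteness, then $d_1^2\geq d_2d_0$, then $d_2\geq1$ and $d_3\geq1$, leaving precisely $(14,15,0)$ and $(18,27,0)$).

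There is, however, a genuine gap in your exclusion of the fibrations over curves, cases \eqref{VeroneseFibration} and \eqref{delPezzoFibration} of Fact \ref{NefBigFact}. For a del Pezzo fibration over a curve, $K_{\R}+H_{\R}$ is the pullback of a divisor class from a curve, so its \emph{square} already vanishes as a cycle; this yields \emph{two} independent numerical equations, $(K_{\R}+H_{\R})^2H_{\R}=0$ and $(K_{\R}+H_{\R})^3=0$, i.e. $\nu=42\lambda-18g-332$ and $\nu=\lambda^2-199\lambda+62g+1674$ simultaneously. You impose only the first, and that single equation does \emph{not} exclude the case: among the $33$ pairs, $(\lambda,g,\nu)=(11,7,4)$, $(12,9,10)$ and $(14,14,4)$ all satisfy $\nu=42\lambda-18g-332$ with $\nu\geq0$; likewise the second equation alone leaves $(14,15,14)$. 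Only by intersecting the two conditions, as the paper does, is the del Pezzo fibration ruled out --- otherwise your proposition would have to admit a third case. The same sharpening is needed for the Veronese fibration: ``$(2K_{\R}+3H_{\R})$ restricted to a fibre vanishes'' must be replaced by $(2K_{\R}+3H_{\R})^2\equiv0$, giving the two equations $\nu=171\lambda-80g-1320$ and $\nu=8\lambda^2-2101\lambda+724g+17364$ which the paper sets equal to each other. (A minor slip in the same passage: for $(\PP^3,\O_{\PP^3}(3))$ one has $K_{\R}H_{\R}^2=-36$, not $-108$.)
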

\begin{proof}
We apply Fact \ref{NefBigFact} in order to  recognize when $K_{\R}+H_{\R}$ is  nef and big.
One sees easily that $(\R,H_{\R})$ cannot be as in Fact \ref{NefBigFact}, cases (\ref{P3red}) and (\ref{Q3red}).
 Assume that $(\R,H_{\R})$ is as in Fact \ref{NefBigFact}, case (\ref{VeroneseFibration}).
 Then, from Lemma \ref{invariantiRid}(\ref{powerKR}), we obtain the two conditions:
 \begin{align}
  0 &= (2K_{\R}+3H_{\R})^3     
    = 8\lambda^2-2101\lambda+724g-\nu+17364 , \\
 0 &=  (2K_{\R}+3H_{\R})^2\,H_{\R}     
    = -171\lambda+80g+\nu+1320  .
 \end{align}
Thus we must have 
$
\nu=
8 \lambda^2-2101 \lambda+724 g+17364 = 171 \lambda-80 g-1320 
$,
but 
there are no pairs $(\lambda,g)$,
not excluded by Lemma \ref{lemma extreme}, that satisfy it. 
Similarly, if $(\R,H_{\R})$ is as in Fact \ref{NefBigFact}, case 
(\ref{delPezzoFibration}),  we deduce that
$
\nu=\lambda^2-199 \lambda+62 g+1674 = 42 \lambda-18 g-332 
$,
which is again impossible.
Now, assume that $(\R,H_{\R})$ is as in Fact \ref{NefBigFact}, case (\ref{Mukaired}).
The three conditions $K_{\R}^3+H_{\R}^3=K_{\R}^2\,H_{\R}-H_{\R}^3=K_{\R}\,H_{\R}^2+H_{\R}^3=0$
become
$
\lambda^2-76 \lambda+14 g-7 \nu+672 = -40 \lambda+14 g+3 \nu+336 = -\lambda+2 g-\nu-2 =0
$,
but there is no $3$-tuple of integers that satisfies them.
Finally, if $(\R,H_{\R})$ is as in Fact \ref{NefBigFact}, case (\ref{quadricFibrOverSurface}),
i.e. a conic bundle over a surface,
we then deduce  the relation:
$
  0 = (K_{\R}+H_{\R})^3  
  = \lambda^2-199\lambda+62g-\nu+1674 
  $. 

  Now, if  $K_{\R}+H_{\R}$ is nef and big, 
  using again Lemma \ref{invariantiRid}, 
we can apply all the inequalities contained in Fact \ref{LogGeneralIneq}. 
For example, the obvious inequality $d_1\geq 1$ translates to 
$\nu\leq -\lambda+2g-3$, and in particular we see that
only finitely many triples $(\lambda,g,\nu)$ 
are not excluded (more precisely, there are $478$ not excluded triples).
If now one applies the inequality $d_1^2-d_2\,d_0\geq 0$, 
which translates to 
$43 \lambda^2-22 \lambda g+4 g^2+43 \lambda \nu-22 g \nu-328 \lambda-8 g-328 \nu+4\geq 0$,
then only  $18$ triples are not excluded. Among these, 
only $(14, 15, 0)$ and  $(18, 27, 0)$
are not excluded by $d_2=-42 \lambda+18 g+\nu+332\geq 1$ 
and $d_3=\lambda^2-199 \lambda+62 g-\nu+1674\geq 1$ (and not excluded 
by any other inequality stated 
in Fact \ref{LogGeneralIneq}).
\end{proof}
\subsection{Applying  Le Barz's formula}
We can apply Fact \ref{multisecants} to the general hyperplane section $S\subset\PP^5$ 
of $\B\subset\PP^6$, by taking into account the following:
\begin{itemize}
\item $S\subset\PP^5$ can contain at most finitely many lines, 
and all them have self-intersection $\leq -1$;
\item the exceptional divisors of $\B$ correspond to the lines 
$l\subset S$ with self-intersection $l^2=-1$; and so we have 
\begin{equation}\label{autointersezione}
\sum_{\begin{subarray}{l} l\, \mathrm{line}\\l\subset S\end{subarray}}\binom{5+l^2}{4} = 
\nu + \sum_{\begin{subarray}{l} l\, \mathrm{line},\,l\subset S\\ l^2\leq -6\end{subarray}}\binom{5+l^2}{4}
\geq \nu ;
\end{equation}
\item since $S\subset\PP^5$ is cut out by cubics, it cannot have  $4$-secant lines.
\end{itemize}
Thus, using (\ref{autointersezione}) and Lemma \ref{lemmaChern}(\ref{sezHiperpianagener}), 
from Fact \ref{multisecants} we deduce the following inequality:
\begin{equation}\label{patrick4Ineq}
 \begin{split}
\nu &\leq (1/8) {\lambda}^{4}+(3/4) {\lambda}^{3}-3 {\lambda}^{2} g-(453/8) {\lambda}^{2}+20 {\lambda} g   +13
     g^{2}   \\ & \qquad +(2835/4) {\lambda}-73 g-2894.
 \end{split}
     \end{equation}
Now, from Proposition \ref{casiPossi} and the inequality (\ref{patrick4Ineq}), one deduces  
 that  there are only the two  following possibilities:
 \begin{enumerate}[(A')]
 \item\label{case1Log} $(\lambda,g,\nu)=(14,15,0)$ and $(\B,H_{\B})=(\R,H_{\R})$ is of log-general type;
 \item\label{case2Fibration} $(\lambda,g,\nu)=(13,12,0)$ and  $(\B,H_{\B})=(\R,H_{\R})$ has the structure 
 of a conic bundle  over a polarized surface $(Y,H_{Y})$.
\end{enumerate} 
\subsection{Conclusion of the proof} 
In case (\ref{case2Fibration}') above, (by Lemma  \ref{invariantiRid}) 
we get 
$h^0(K_{\R}+H_{\R})=h^3(-H_{\R})=-\chi(\O_{\R}(-H_{\R}))=3$
and $d_2(\R,H_{\R})=2$. From this, it follows that $(Y,H_{Y})=(\PP^2,\O_{\PP^2}(1))$, 
see \cite[Proposition~11.5]{besana-biancofiore-numerical}.
Assume now that $\B$ is as in case (\ref{case1Log}'). 
 We first show that $K=K_{\B}$ is numerically trivial 
with the same argument as in \cite[(3.8), p.~69]{debarre}.
So, let $C$ be a fixed irreducible curve on $\B$. A standard argument 
(see for example {\it loc. cit.}) and Bertini  theorem assures that 
for   $m$ sufficiently large there exists an irreducible and reduced $Y\in |mH|$ containing  $C$ 
($H=H_{\B}$ stands for the hyperplane divisor).
From Lemma \ref{lemmaChern}(\ref{powerK}),
we see that  $K|_{Y}\cdot H|_{Y}=K\cdot H\cdot (mH)=m(K\cdot H^2)=0$ and that 
$K|_{Y}^2=K\cdot K\cdot (mH)=m(K^2\cdot H)=0$.  Thus, applying  Hodge index theorem  
on the irreducible surface $Y$ (see \cite[Th\'eor\`eme 7.1]{kleiman}), we deduce that $K|_{Y}$ is numerically trivial.
 It follows that $K\cdot C=K|_{Y}\cdot C=0$, and hence that $K$ is numerically trivial.
 Now, from \cite[Theorem~8.2]{kawamata1985}, we deduce that 
 the Kodaira dimension of $\B$ is zero, and consequently,
 from \cite[Corollary~1.1.2]{livorni-sommese-15}, we deduce that $K$ is trivial. 
 Now we conclude that $\B$ is Pfaffian from the main result in \cite{walter-pfaffian}.
 \section{Proof of Theorem \ref{CremonaP7}}\label{sec cubo-cubic}
 Let $\varphi$ be a special Cremona transformation of $\PP^7$.
 Denote 
  by $\B$ (resp. $\B'$) the base locus of $\varphi$ (resp. $\varphi^{-1}$),
  and by $(\delta_1,\delta_2)$ the type of $\varphi$. 
  From \eqref{dimensionBaseLocus}
and 
since one has  $\dim\B\leq4$, it follows $\dim\B=\dim\B'=4$ and $\delta_1=\delta_2=3$.
Thus the restriction of $\varphi$ to a general hyperplane $\PP^6\subset\PP^7$ 
is a cubo-cubic birational transformation of $\PP^6$ into a cubic hypersurface of $\PP^7$
whose base locus is a smooth irreducible threefold $X\subset\PP^6$, which is a
general hyperplane section of $\B$. 
We now conclude the proof of Theorem \ref{CremonaP7} 
by adapting the proof of Theorem \ref{MainTheorem}
in order to get information on $X$
and hence on $\B$.
\begin{proof}[Proof of Theorem \ref{CremonaP7}]
Keep the notation as above.
As $\codim\B=\codim\B'\geq 3$
we  have $\deg_{i}(\varphi)=\deg_{7-i}(\varphi)=3^i$ for $i=0,1,2$, and 
by \eqref{segresDegs} 
we can express 
$\deg_{3}(\varphi)$ and $\deg_{4}(\varphi)$ 
as functions of $\lambda=\deg\B$ 
and of the sectional genus $g$ of $\B$; namely, we have: 
\begin{equation}\label{multidegreeCremonaP7}
\begin{array}{lll}
\deg_1(\varphi)=3, &\quad \deg_2(\varphi)= 9, &\quad \deg_3(\varphi)= -\lambda+27, \\ \deg_4(\varphi)= -7 \lambda+2 g+79, &\quad  \deg_5(\varphi)= 9, &\quad \deg_6(\varphi) =3 .
\end{array}
\end{equation}
By \eqref{vanishingLemma}
 we deduce that $\B$ is projectively normal 
and its Hilbert polynomial, $P_{\B}(t)$, satisfies the conditions:
$P_{\B}(3)=112$, $P_{\B}(2)=36$, and $P_{\B}(1)=8$, from which 
it follows that $P_{\B}(t)$ has the following expression: 
\begin{multline*}
\lambda\,\binom{t+4}{4}+(-\lambda-g+1)\,\binom{t+3}{3}+(-6 \lambda+4 g+44)\,\binom{t+2}{2} \\ +(14 \lambda-6 g-110)\, (t+1)+(-11 \lambda+4 g+92)  .
\end{multline*}
From \eqref{segresDegs} and \eqref{multidegreeCremonaP7} 
one determines 
the degrees of the Segre 
classes of the normal bundle of $\B$, and then, 
from the formulas in the proof of Lemma~\ref{lemmaChern}, 
  one obtains the numerical invariants of $X$
as functions of $\lambda$ and $g$.
In particular, 
denoting by 
$K_{X}$ and $H_{X}$, respectively, a canonical divisor and
 a hyperplane section divisor of $X$, and by $S$ a smooth member of $|H_X|$,
one has:
\begin{gather}\label{powersKCuboCubic}
\begin{split}
 K_{X}\,H_{X}^2 &=  -2\lambda +2g-2,\quad   \\   
K_{X}^2\,H_{X} &= -39\lambda+14g+328, \quad  \\     
K_{X}^3 &=  \lambda^2-77\lambda+14g+646 ; 
\end{split}\\
\label{KSCubo}
K_{S}^2 = -42 \lambda+18 g+324, \quad 
      c_2(\T_{S}) =  -30 \lambda+18 g+216 .
\end{gather}
Now we proceed as in Lemma \ref{lemma extreme}, by applying 
the inequalities 
\eqref{LiSoIneq1}, \eqref{LiSoIneq2}, \eqref{LiSoIneq3} and \eqref{LiSoIneq4} 
to the threefold $X$, and the inequalities \eqref{essentialCremonaIneq} 
to the Cremona transformation $\varphi$.
Then, 
using also  
that $\lambda$ and $g$ are
non-negative integers bounded above, respectively, by 
$27$ and by the Castelnuovo’s bound, 
one can show 
that there are only $27$ not excluded pairs $(\lambda,g)$, which are the 
following:
$(8,2 )$,  $(9,4 )$,  $(10,6 )$,  $(10,7 )$,  $(11,8 )$,  $(11,9 )$,  $(11,10 )$,  $(12,10 )$,  $(12,11 )$,
      $(12,12 )$,  $(12,13 )$,  $(13,12 )$,  $(13,13 )$,  $(13,14 )$,  $(13,15 )$,  $(13,16 )$,  $(14,15 )$,
      $(14,16 )$,  $(14,17 )$,  $(14,18 )$,  $(15,19 )$,  $(15,20 )$,  $(15,21 )$,  $(16,22 )$,  $(16,23 )$,
      $(17,25 )$,  $(18,28 )$.
      One then sees, using \eqref{powersKCuboCubic} and 
the same argument as in the proof of Lemma \ref{LemmaThereExistsReduction}, that 
 $(X,H_X)$
admits a unique minimal reduction $(X',H')$,
and we denote by $\nu$ the number of exceptional divisors on 
$(X,H_{X})$. 
Now, quite similarly as we have deduced the inequality \eqref{patrick4Ineq}, 
from  \eqref{KSCubo} and Fact \ref{multisecants}, 
we  deduce the following:
\begin{equation}\label{multisecantCuboCubic}
\begin{split}
 \nu&\leq (1/8)  \lambda^4+(3/4)  \lambda^3-3  \lambda^2 g-(445/8)  \lambda^2+20  \lambda g+13 g^2
 \\ & \qquad +(2815/4)  \lambda-83 g-2873 .
\end{split}
\end{equation}
In particular, we see that
only finitely many triples $(\lambda,g,\nu)$ 
are not excluded (precisely, there are $859$ not excluded triples).
We now show that $(X',H')$ is not of log-general type.
Assume by contradiction that $K'+H'$ is nef and big, where 
$K'$ stands for the canonical divisor of $X'$.
From \eqref{powersKCuboCubic} (see also the proof of Lemma \ref{invariantiRid}) 
we can determine the pluridegrees $d_0,\ldots,d_3$ of $(X',H')$ as 
functions of $\lambda$, $g$, and $\nu$:
\begin{equation}\label{pluridegreesCuboCubic}
\begin{split}
 d_0=\lambda+\nu, &\qquad 
 d_1=-\lambda+2 g-\nu-2,\\  
 d_2=-42 \lambda+18 g+\nu+324, &\qquad 
 d_3=\lambda^2-199 \lambda+62 g-\nu+1624.
\end{split}
\end{equation}
Then by Fact \ref{LogGeneralIneq} 
in particular we  deduce:
\begin{gather*}
 d_1\geq 1,\quad d_2\geq 1, \quad d_3\geq 1,  \\
 d_1^2-d_2 d_0 = 43 \lambda^2-22 \lambda g+4 g^2+43 \lambda \nu-22 g \nu-320 \lambda-8 g-320 \nu+4 \geq0.
\end{gather*}
But one can sees that 
the set of triples $(\lambda,g,\nu)$ satisfying these inequalities, 
with $(\lambda,g)$ being one of the $27$ not excluded pairs,
consists of only one element which is $(18,28,0)$.
So  \eqref{pluridegreesCuboCubic} becomes: 
\begin{equation*} 
 d_0= 18,\quad 
 d_1= 36,\quad 
 d_2= 72,\quad 
 d_3= 102.
\end{equation*}
This is impossible by \cite[Lemma~1.1, (1.1.2)]{beltrametti-biancofiore-sommese-loggeneral},
because the equality 
$d_1^2-d_2 d_0=0$ should  imply $d_2^2-d_3 d_1=0$,
and this is not the case.
It follows that $(X',H')$ 
is as one of the six cases of Fact \ref{NefBigFact}.
Cases \eqref{P3red}, \eqref{Q3red}, \eqref{VeroneseFibration}, and \eqref{Mukaired} 
 are easily excluded,  
arguing as at the beginning of the proof of  Proposition~\ref{casiPossi}.
Therefore 
$(X',H')$ is as in case either \eqref{delPezzoFibration} or \eqref{quadricFibrOverSurface}.
 Then we must have $d_3=(K'+H')^3=0$, 
which by \eqref{pluridegreesCuboCubic} means  
$\nu =  \lambda^2-199 \lambda+62 g+1624 $.
But from this and \eqref{multisecantCuboCubic} we get 
that only the triple $(\lambda,g,\nu)=(12,10,0)$ is not excluded, and in particular
 \eqref{pluridegreesCuboCubic} becomes: 
$d_0= 12$, 
 $d_1= 6$, 
 $d_2= 0$, 
 $d_3= 0$.  
By $d_2=0$ we deduce that case \eqref{quadricFibrOverSurface} is impossible. 
Thus we conclude that 
 $(X',H')$ is as in case \eqref{delPezzoFibration},
i.e. a del Pezzo fibration over a  polarized curve $(C,H_{C})$, and 
$(X,H_{X})=(X',H')$ is a minimal threefold of degree $12$ and sectional genus $10$.
Moreover,  denoting by 
$F$ the generic fibre of the fibration,
the following facts are known (see \cite[p.~13]{besana-biancofiore-numerical}): 
\begin{equation*}
 g(C)=1-\chi(\O_X) ,  \quad  \deg(H_C) = -\chi(\O_X)-\chi(\O_X(-H_X)) ,\quad  \deg(F)=d_1/\deg(H_C), 
\end{equation*}
from which it follows that $(C,H_C)=(\PP^1,\O_{\PP^1}(1))$ and $\deg(F)=6$. 
\end{proof} 
\section{Examples}\label{sez examples}
\subsection{A threefold of degree \texorpdfstring{$14$}{14} and sectional genus \texorpdfstring{$15$}{15}}\label{degree14sec}  
As it was pointed out in 
\cite[Subsection~4.2]{ein-shepherdbarron} (see also
\cite[Example on p.~282]{crauder-katz-1991}), 
the Pfaffians of the principal $6\times6$ minors of a general $7\times7$ 
skew-symmetric matrix of linear forms on $\PP^6$ give a Cremona 
transformation of $\PP^6$. Its base locus $X\subset\PP^6$, i.e. 
the vanishing locus of the Pfaffians, is a smooth and irreducible threefold 
of degree $14$ and sectional genus $15$, with trivial canonical bundle 
and $h^1(\O_X)=0$. Therefore, examples for case (\ref{LogGeneralCaseThm}) of Theorem \ref{MainTheorem} exist.
One can also find explicit equations for such an $X$ 
with the help of a computer algebra system as {\sc Macaulay2}.
\subsection{A threefold of degree \texorpdfstring{$13$}{13} and sectional genus \texorpdfstring{$12$}{12}}
Here, we construct an example of special Cremona transformation 
as in case (\ref{FibrationCaseThm}) of Theorem \ref{MainTheorem}.
We first construct a threefold in $\PP^6$ of degree $13$ and sectional genus $12$, 
 but which turns out to be singular.
 Let $C\subset\PP^3$ be the isomorphic projection of the 
quintic rational normal curve together with an embedded point $p_0$, and 
let $p_1,\ldots,p_5$ be $5$ general points on a general plane passing through $p_0$.
Then the homogeneous ideal 
of the non-reduced scheme $C\cup\{p_1,\ldots,p_5\}\subset\PP^3$ is 
generated by $7$ quartics,
which define a quarto-quadric birational map $\PP^3\dashrightarrow\PP^6$. The image of this map is
an irreducible singular threefold of degree $13$, sectional genus $12$, and cut out by $7$ cubics. 
These $7$ cubics give a cubo-quintic Cremona transformation of $\PP^6$.
The next step is to deform this Cremona transformation 
in order to obtain a special Cremona transformation with the same invariants.
The idea is to take random restrictions and liftings of the map, but 
this is easier to do than to explain. In the following {\sc Macaulay2} code, 
we get the desired example.
{ \footnotesize
\begin{verbatim}
Macaulay2, version 1.9.2.1
with packages: ConwayPolynomials, Elimination, IntegralClosure, 
               LLLBases, PrimaryDecomposition, ReesAlgebra, TangentCone
i1 : loadPackage "Cremona";
i2 : K = QQ; P1 = K[u_0,u_1]; P3 = K[t_0..t_3]; P6 = K[x_0..x_6];
i6 : curve = trim kernel map(P1,P3,{u_1^5,u_0^5-u_0^3*u_1^2,u_0^4*u_1,u_0*u_1^4});
o6 : Ideal of P3
i7 : points = intersect {ideal(t_3,t_2,t_1),ideal(t_3,t_2,t_0-t_1),
              ideal(t_2-t_3,t_1+t_3,t_0-t_3),ideal(t_2-t_3,t_1+t_3,t_0+t_3),
              ideal(t_2-t_3,t_1-t_3,t_0),ideal(t_2-t_3,t_1-t_3,t_0+t_3)};
o7 : Ideal of P3
i8 : time phi = rationalMap kernel(map(P3,P6,gens saturate(points * curve)),3)
     -- used 1.77549 seconds
o8 = cubic rational map from PP^6 to PP^6
o8 : RationalMap
i9 : time phi = lift invertBirMap rationalMap(flatten(
                       (invertBirMap phi)|ideal(x_0-x_1,x_2-x_4)),Dominant=>3)
     -- used 104.747 seconds
o9 = cubic rational map from PP^6 to PP^6
o9 : RationalMap
i10 : time phi = invertBirMap lift invertBirMap rationalMap(
                       flatten(phi|ideal(x_0)),Dominant=>true) 
     -- used 40.6531 seconds
o10 = cubic rational map from PP^6 to PP^6
\end{verbatim}
} \noindent 
Furthermore, the package \href{https://github.com/Macaulay2/M2/blob/master/M2/Macaulay2/packages/Cremona.m2}{\emph{Cremona}} (v$\geq$3.9.1) provides the method \texttt{specialCremonaTransformation} 
 which is able to produce examples of special Cremona transformations according to Table \ref{table: classCremona}.
 In particular, we can obtain the example above as follows:
{ \footnotesize
\begin{verbatim}
i11 : time phi = specialCremonaTransformation 12
     -- used 0.208033 seconds
o11 = Cremona transformation of PP^6 of type (3,5)
o11 : RationalMap
i12 : time projectiveDegrees phi
     -- used 0.000019401 seconds
o12 = {1, 3, 9, 14, 12, 5, 1}
o12 : List
\end{verbatim}
} \noindent 
Once we have the explicit transformation, 
the unique non-trivial condition to check is the smoothness of the base locus. This can be done 
 by reducing the characteristic (see \cite[Section~2.5]{note3}).
 \begin{remark}
Following \cite[Example~5.4 and Theorem~4.1]{migliore-peterson}, 
one can construct another smooth threefold
  in $\PP^6$ of degree $13$ and sectional genus $12$,
  but which is not cut out by cubics.
  \end{remark}
 \subsection{A threefold of degree \texorpdfstring{$12$}{12} and sectional genus \texorpdfstring{$10$}{10}}
According to \cite[Case~6a, p.~260]{threefoldsdegree12},
a threefold in $\PP^6$ of degree $12$, sectional genus $10$, 
with a structure of fibration over $\PP^1$, the fibers of which 
are sextic del Pezzo surfaces, exists. 
This threefold is contained in 
 a quadric hypersurface of $\PP^6$, 
so that it 
does not define a cubo-cubic transformation 
as 
the one constructed via 
 restriction to a general $\PP^6$ 
of the transformation in Theorem~\ref{CremonaP7}.\footnote{Note 
that there also exist
smooth curves of degree $6$ and genus $3$ in $\PP^3$, contained in
a quadric surface, 
that do not define 
cubo-cubic transformations as in
 case \eqref{cubocubicinP4} of Theorem \ref{classificazioneCasoCurve}.}

Moreover, 
a cubo-cubic Cremona transformation  of $\PP^7$ 
(linearly equivalent to an involution) 
whose base locus is  
a fourfold of degree $12$ and sectional genus $10$ exists.
This is the polar map
of the dual hypersurface of $\PP^1\times\PP^1\times\PP^1\subset\PP^7$; 
its base locus consists of the union of three $\PP^1\times \PP^3\subset\PP^7$ 
intersecting pairwise in a $\PP^1\times\PP^1\times\PP^1\subset\PP^7$.

\subsection{Two threefolds of degree \texorpdfstring{$10$}{10} and sectional genus \texorpdfstring{$6$}{6}}
We conclude with two further examples of cubic birational transformations of $\PP^6$.

\subsubsection{} The $3\times 3$ minors of a general $3\times5$ 
 matrix of linear forms on $\PP^6$ give a special cubo-cubic birational 
transformation of $\PP^6$ into the Grassmannian $\mathbb{G}(2,4)\subset\PP^9$. 
Its base locus $X\subset\PP^6$, i.e. 
the vanishing locus of the $3\times 3$ minors, is a scroll over $\PP^2$ of 
degree $10$ and sectional genus~$6$.
 
\subsubsection{} The special quadro-cubic birational transformation of $\PP^8$ into a factorial cubic 
hypersurface of $\PP^9$, described in \cite[Example~5.5]{note},
can be extended (using the same method described in \cite{note3}) 
to a  quadro-cubic Cremona transformation of $\PP^9$.
The restriction to a general $\PP^6\subset\PP^9$ of the inverse of 
this Cremona transformation gives us an example of 
special cubo-quadric birational transformation of $\PP^6$ into a factorial 
complete intersection of three quadrics in $\PP^9$. Its base locus is a smooth irreducible 
threefold of degree $10$ and sectional genus $6$ that has a structure of scroll over 
$\PP^2$ with four double points blown up; the existence of such a threefold had been left undecided
 in \cite{fania-livorni-ten}.

\subsection*{Acknowledgements}
 I wish to thank Francesco Russo 
 for valuable communications and
 for  posing  to me the 
problem studied in this paper. 
  
\bibliographystyle{amsalpha}
\bibliography{bibliography}

\end{document}